\def\ps@pprintTitle{%
  \let\@oddhead\@empty
  \let\@evenhead\@empty
  \let\@oddfoot\@empty
  \let\@evenfoot\@empty}
\algrenewcommand\alglinenumber[1]{\footnotesize #1:} 
\newcommand{\ul}{u_l}
\newcommand{\uh}{u_h}
\newcommand{\calNd}{\mathcal N_{\delta}}
\newcommand{\Mtyp}{M_{\text{typ}}}
\newcommand{\sighat}{S}                   
\newcommand{\sinc}{\text{sinc}}
\DeclareMathOperator{\diam}{diam}
\newcommand{\pt}{\partial_t}
\newcommand{\dt}{\Delta t}
\newcommand{\dk}{\Delta k}
\newcommand{\beq}{\begin{equation}}
\newcommand{\eeq}{\end{equation}}
\newcommand{\beqs}{\begin{equation*}}
\newcommand{\eeqs}{\end{equation*}}
\newcommand{\Nx}{N_x}
\newcommand{\Nt}{N_t}
\newcommand{\bigO}{{\mathcal O}}
\newcommand{\xv}{\bold x}
\newcommand{\yv}{\bold y}
\newcommand{\abs}[1]{\left\vert #1\right\vert}
\newcommand{\kv}{\bold k}
\newcommand{\calE}{\mathcal{E}}
\newcommand{\norm}[1]{\Vert #1\Vert}
\newcommand{\calG}{\mathcal{G}}
\newcommand{\CGA}{\calG_A}
\newcommand{\nv}{\bold n}
\newcommand{\mv}{\bold m}
\newcommand{\erf}{\text{erf}}
\newcommand{\Oh}[1]{\mathcal O\left(#1\right)}
\newcommand{\R}{\mathbb{R}}          
\newcommand{\Z}{\mathbb{Z}}          
\newtheorem{theorem}{Theorem}
\newenvironment{proof}[1][Proof]{\begin{trivlist}
\item[\hskip \labelsep {\bfseries #1.}]}{\end{trivlist}}
\newtheorem{prop}{Proposition}[section]
\newtheorem{lem}{Lemma}[section]
\newtheorem{remark}{Remark}[section]
\begin{document}

\def\ccm{Center for Computational Mathematics, Flatiron Institute, Simons Foundation,
  New York, New York 10010}

\def\nyu{Courant Institute of Mathematical Sciences,
  New York University, New York, New York 10012}

\begin{frontmatter}
  \title{Truncated kernel windowed Fourier projection: a fast algorithm for the 3D free-space wave equation}
      
 \author{Nour G. Al Hassanieh\fnref{ccm}}
\address[ccm]{\ccm}
 \ead{nalhassanieh@flatironinstitute.org}

\author{Alex H. Barnett\fnref{ccm}}
\ead{abarnett@flatironinstitute.org}

\author{Leslie Greengard\fnref{ccm,nyu}}
\address[nyu]{\nyu}
\ead{lgreengard@flatironinstitute.org}

\begin{abstract}
We present a spectrally accurate fast algorithm for evaluating the solution 
to the scalar wave equation in free space driven by a large collection of point sources in a bounded domain. 
With $M$ sources temporally discretized by $\Nt$ time steps of size $\dt$,
a naive potential evaluation at $M$ targets on the same time grid
requires $\bigO(M^2 \Nt)$ work.
Our scheme requires $\Oh{(M + N^3\log N)\Nt}$ work, where
$N$ scales as $\Oh{1/\dt}$, i.e., the maximum signal frequency.
This is achieved by using the recently-proposed windowed Fourier projection (WFP) method to split
the potential into a local part, evaluated directly, plus a smooth history part
approximated by an $N^3$-point equispaced discretization of the Fourier transform, where
each Fourier coefficient obeys a simple recursion relation.
The growing oscillations in the spectral representation (which would be present with a naive
use of the Fourier transform) are controlled by spatially
truncating the hyperbolic Green's function itself. Thus, the method avoids the need for 
absorbing boundary conditions.
We demonstrate the performance of our algorithm with up to a million sources and targets at 6-digit accuracy.
We believe it can serve as a key component in addressing time-domain wave equation scattering problems.
\end{abstract}

\begin{keyword}
wave equation, free-space, hyperbolic potentials, high-order, truncated kernel, Fourier methods
\end{keyword}

\end{frontmatter}

\section{Introduction}

We present a fast algorithm to compute free-space hyperbolic potentials in 3D to high accuracy
without the need for artifical absorbing boundary conditions.
Such potentials are space-time convolutions of the free-space retarded
Green's function with a given source distribution; the latter
could be a singular density supported on a boundary (a layer potential)
\cite{Guenther2012,Costabel04,sayasbook},
a smooth density supported in a bounded domain (a volume potential),
or a set of point-like scatterers. 
For our present purposes, we assume
that the source distribution has already been spatially discretized
as a large collection of points.
Other aspects of potential theory, including
integral representations, quadrature design,
and the application of our fast algorithm to time-marching the integral equation solution
will be considered at a later date.
However, there are many advantages to using integral equation
methods \cite{CK83,Costabel04,Yilmaz2004,Banjai2014,Sayas,Barnett2020,Liu2020}. In particular, 
they do not require absorbing boundary conditions
to avoid spurious reflections
from a finite computational domain, and (in the homogeneous case),
they discretize the boundary alone, without the need for complicated mesh generation.
The rapid evaluation of such potentials accelerates time-domain integral equation-based
schemes for the solution of wave scattering problems.
The latter are essential in many scientific and engineering applications such as acoustics
\cite{kaltenbacher2018computational,wang2019}
and, generalizing to vectorial cases,
electromagnetic/optical scattering \cite{chewbook,CMS,CK83,Yilmaz2004,Liu2020}
and seismic propagation \cite{Virieux12}.
The time-domain approach to such problems is especially useful
when faced with wide-band signals and/or nonlinear materials.

In this paper, we consider 
the solution of the free-space wave equation
\begin{subequations}
\label{eq:freeSpace}
  \begin{align}
  \pt^2u(\xv,t) - \Delta u(\xv,t) &= f(\xv,t), \quad \xv\in\mathbb R^{3}, \; t \in (0,T],
    \\
u(\xv,0) = \pt u(\xv,0) &= 0, \qquad\quad\; \xv\in\mathbb R^{3}. 
\end{align}
\end{subequations}
The forcing function $f$ will be assumed to be a sum of $M$ point sources 
located at $\yv_j$, $j = 1, \dots, M$, with smooth time signatures $\sigma_j(t)$, 
such that $\sigma_j(t)=0$ for $t\leq 0$:
\beq\label{eq:forcing}
f(\xv,t) = \sum_{j = 1}^M\delta(\xv - \yv_j)\sigma_j(t),
\eeq
where $\delta(\xv)$ denotes the three-dimensional Dirac distribution.
We assume that the points $\yv_j$ are 
contained in a finite computational domain $B \equiv [-1,1]^3$ and that the 
solution is sought within $B$ as well.
In particular, we focus on the efficient evaluation of $u(\xv,t)$
at a large number of spatial targets $\xv\in B$, for a set of discrete time
slices $t\in(0,T]$.

The problem \eqref{eq:freeSpace} can be solved exactly using 
the free-space Green's function for the wave equation (the \textit{wave kernel})
\cite{Guenther2012},
given by
\beq\label{eq:Greens}
G(\xv,t) =
\frac{\delta(t - \abs{\xv})}{4\pi \abs{\xv}},
\eeq
where $\delta(t)$ denotes the one-dimensional Dirac distribution.
The exact solution to \eqref{eq:freeSpace} is the space-time convolution
\beq\label{eq:mainSolnRep}
u(\xv,t) = 
\int_0^t \int_{\mathbb{R}^3} G(\xv - \yv,t-\tau) f(\yv,\tau) 
d\yv d\tau.
\eeq
Given the fact that the forcing function takes the form
\eqref{eq:forcing}, then using the $\delta$-function form of the Green's function
(a manifestation of the strong Huygens' principle),
we may write
\begin{align}
u(\xv,t) &= \sum_{j=1}^M 
\int_0^t G(\xv - \yv_j,t-\tau) \sigma_j(\tau) d\tau 
\label{eq:mainSolnRep2a} \\
&=
\sum_{j=1}^M \frac{\sigma_j(t- \abs{\xv-\yv_j})}
    {4\pi \abs{\xv-\yv_j}},
    \qquad \xv \neq \yv_j, \, j=1,\dots,M.
\label{eq:mainSolnRep2b}
\end{align}
Direct evaluation of this retarded potential at $M$ targets
at a single time $t$ clearly has cost $\bigO(M^2)$.
(In an integral equation setting, such an evaluation would be needed
at each time step.)

Rather than making direct use of the free-space Green's function, as above, it is 
tempting to solve~\eqref{eq:freeSpace} using Fourier analysis.
For this, we define the spatial Fourier transform pair by 
\beq
\label{eq:fourierdef}
\hat u (\kv,t) = \int_{\mathbb R^3} u(\xv,t) e^{i\kv\cdot \xv}d\xv, \quad \text{and} \quad u (\xv,t) =\frac{1}{(2\pi)^3} \int_{\mathbb R^3} \hat u(\kv,t) e^{-i\kv\cdot \xv}d\kv.
\eeq
Fourier transformation of the scalar wave equation 
\eqref{eq:freeSpace} itself results in the ordinary differential equation (ODE)
\beq
\pt^2 \hat u(\kv,t) +\kappa^2 \hat u(\kv,t) = \sighat(\kv,t),
\eeq
wkere (here and throughout the paper) $\kappa := | \kv |$ denotes the wavevector magnitude.
The right-hand side driving $\sighat(\kv,t)$ is the spatial Fourier transform of the source $f$.
In our spatially discretized setting with the latter given by \eqref{eq:forcing}, we have
\beq\label{eq:sighat}
\sighat(\kv,t) = \sum_{j = 1}^M  \sigma_j(t) e^{i \kv \cdot \yv_j}.
\eeq
Solution of the ODE at each $\kv$ by Duhamel's principle
leads to the spectral representation 
\beq
\label{eq:uexactF}
u(\xv,t) = \frac{1}{(2\pi)^3} 
\int_{\mathbb{R}^3}\int_0^t  
e^{-i \kv \cdot \xv} \frac{\sin \kappa (t-\tau)}{\kappa} \sighat(\kv,\tau) d\tau
d\kv.
\eeq
By comparing \eqref{eq:uexactF} and \eqref{eq:mainSolnRep},
or by direct computation of $\hat G(\kv,t)$,
one finds that the spectral representation of the wave kernel is
\beq\label{eq:spectralGreens}
G(\xv, t) = \frac{1}{(2\pi)^3}\int_{\mathbb R^3} \frac{\sin \kappa t}{\kappa} e^{-i\kv\cdot\xv}d\kv,
\qquad t>0.
\eeq

At first glance, the Fourier-based solution \eqref{eq:uexactF}
is of little numerical interest for three simple reasons. First, the wavevector integrand 
is slowly decaying (note that the wave kernel decays like $1/\kappa$ while
$\sighat$ does not decay at all);\footnote{A more detailed analysis shows that, for $\sigma_j$ smooth,
after performing the time integral, the wavevector integrand decays as $1/\kappa^2$.
Truncation of this integral would thus still result in slow convergence.}
this is associated with the non-smooth ``switch on'' of the response at $\tau=t$.
Second, this integrand becomes more and more 
oscillatory as time advances, so that a finer and finer volumetric discretization of $\kv$ is needed
as $t \rightarrow \infty$. Third, the representation is still dependent on the entire
space-time history of $f$.
It turns out, however, that all three of these obstacles can be overcome, leading 
to our fast algorithm.

The first concern is addressed by the windowed Fourier projection (WFP) method,
introduced by the authors in \cite{wfp2025}, 
which ensures that the integrand in \eqref{eq:uexactF} is rapidly decaying with respect to $\kv$, so
long as one limits the Fourier representation to the ``history part'' of the solution.
More precisely, in the WFP approach, we
split the solution \eqref{eq:mainSolnRep2a} into a non-smooth local part, $\ul$, plus a smooth history 
part, $\uh$, using a temporal blending function, $\phi$:
\begin{subequations}
\begin{align}
  \ul(\xv,t) &= \int_{t - \delta}^t \sum_{j=1}^M G(\xv - \yv_j, t- \tau) \sigma_j(\tau)[1 - \phi(t - \tau)]\, d\tau,
\label{ul}
  \\
\uh(\xv,t) &= \int_0^t \sum_{j=1}^M G(\xv - \yv_j, t- \tau) \sigma_j(\tau)\phi(t - \tau)\, d\tau,
\label{uh}
\end{align}
\end{subequations}
with $u = \ul + \uh$.
The function $\phi$, defined in the next section, is (numerically) smooth,
with  $\phi(t) = 0$ for $t\leq 0$, and $\phi(t) = 1$ for $t\geq \delta$.
Its parameter $\delta>0$ is
chosen to be of order the shortest timescale present in the signatures $\sigma_j$.
This split is illustrated in Fig.~\ref{fig:split} (specifically the upper region
with $\tau \in [t-\delta,t]$).
The shape of $\phi$ depends on precision parameter $\epsilon>0$,
and is designed so that its $\epsilon$-bandlimit
\footnote{The $\epsilon$-bandlimit of a function $f$ is defined as the
smallest frequency $\omega$ such that $\|\hat f\|_{L^2(\R \backslash [-\omega,\omega])} \le \epsilon$.}
is as small as possible; this will scale as
$\bigO(|\log \epsilon| / \delta)$.
The local part
$\ul$ involves interactions only up to distance $\delta$, so that \eqref{ul} can be computed
directly---%
typically in linear time---using suitable quadrature rules.
The history part $\uh$ is
evaluated as a spectral representation (\eqref{eq:uexactF} modified by a factor $\phi(t-\tau)$ in the integrand) that, due to the blending,
may be truncated with error $\bigO(\epsilon)$ to $|\kv|\le K$ where
$K$ is similar to the signal $\epsilon$-bandlimit.
The fast evaluation of $\uh$ relies on the
non-uniform fast Fourier transform (NUFFT) and overcomes the history-dependence
by exploiting the semigroup property of the governing wave equation.
Specifically, a discretized set of Fourier coefficients
of $\uh$ will be updated by time-marching to arbitrarily high order accuracy,
with a uniform time-step $\dt$
that need only be slightly smaller than the 
time-step needed to resolve the signatures $\sigma_j$ themselves.

\begin{figure}[th]  
  \centering
  \includegraphics[width=0.95\textwidth]{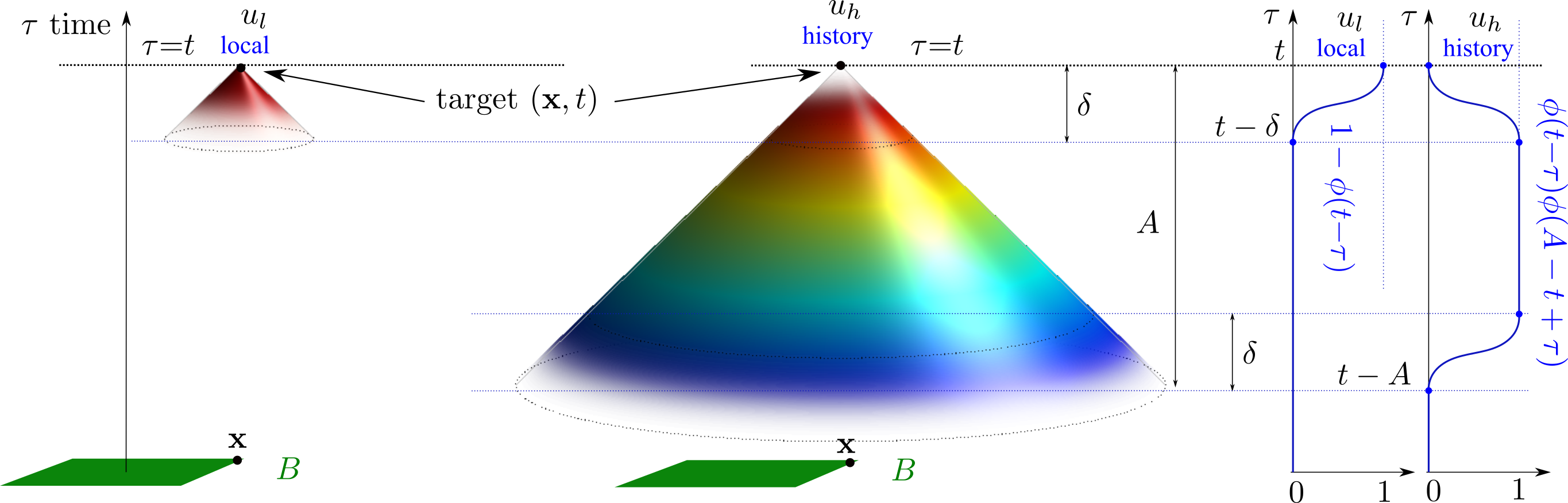}
  \caption{Local and truncated history contributions to the solution $u(\xv,t)$
    at a target location $\xv\in B = [-1,1]^3$ at the current time $t$.
    The left side depicts the space-time diagram (light cone) for the 
    local $\ul$ part, while the central image is the same for
    the history $\uh$ part.
    The vertical axis shows the source time $\tau$ spanning $0$ to
    the current time $t$, and is common to all parts of the figure.
    (Only two of the three dimensions of space are shown; each temporal
    slice of the light ``cone'' is a sphere rather than a circle.)
    The color on the cones $|\xv-\yv|=t-\tau$ indicates the delay $t-\tau$.
    The right side graphs their respective temporal windowing functions;
    see \eqref{ul} for the local part, and \eqref{eq:uexactFtrunc} for the
    history part where the kernel $\CGA$ is smoothly truncated to the finite time horizon
    $t-\tau<A$. Note that
    $A-\delta \ge \diam B$, so that this truncation has no effect in $B$.%
    }
  \label{fig:split}
\end{figure}

In its original form, the WFP method was derived for a periodic box,
avoiding the need to consider the oscillatory nature of the integrand in 
the Fourier integral (the second concern above).
Motivated by the truncated kernel method of \cite{Vico2016} for elliptic PDEs,
we extend the WFP method here by using a
{\it spatially truncated} version
of the wave kernel, which we will denote by $\CGA$.
The essential requirement is that 
$\CGA(\xv-\yv,t)$ coincide with $G(\xv-\yv,t)$
for all $\xv,\yv\in B$, $t\in\R$.
Equivalently, the Green's functions must coincide in a double-sized cubical domain:
\beq
\CGA(\xv,t) = G(\xv,t) \qquad \mbox{for all } \xv \in [-2,2]^3, \; t\in\R.
\label{Gmatch}
\eeq
Radial truncation is extremely convenient since it allows the
passage from spatial to temporal truncation, due to strong Huygens, as follows.
We define 
\beq\label{eq:truncatedKernel}
\CGA(\xv,t) \;:=\; \phi(A - \abs{\xv}) G(\xv,t)    
\; = \; \phi(A-t) G(\xv,t),
\eeq
where we choose the same blending function $\phi$ and same parameter $\delta$
as for the WFP splitting, in order allow the same Fourier truncation $|\kv|\le K$,
and set the history support parameter
\beq
A = 2\sqrt{3} + \delta.
\label{A}
\eeq
One may check (using the fact that $\phi(t)=1$ for $t\ge\delta$)
that this value is the smallest choice of $A$ for which \eqref{Gmatch} holds.
From the last expression in \eqref{eq:truncatedKernel} we immediately see that the (spatial) Fourier
transform of $\CGA(\xv,t)$ is
\beq
\label{eq:truncatedKernelSpectral}
\hat\calG_A(\kv,t) = \phi(A-t) \frac{\sin\kappa t}{\kappa}, \quad t>0.
\eeq
Substituting the truncated kernel for the free-space Green's 
function $G(\xv,t)$ in the Fourier transform domain, using
\eqref{eq:truncatedKernelSpectral}, gives
\beq
\label{eq:uexactFtrunc}
\uh(\xv,t) = \frac{1}{(2\pi)^3} 
\int_{\mathbb{R}^3}\int_{t-A}^t  
e^{-i \kv \cdot \xv} 
\frac{\sin \kappa (t-\tau)}{\kappa} \sighat(\kv,\tau)
\phi(t-\tau)
\phi(A-t+\tau) 
\, d\tau d\kv,
\eeq
which combines the blended ``switch on'' of WFP with the blended ``switch off''
of kernel truncation,
as sketched in the right-most graph in Fig.~\ref{fig:split}.
Note the change in lower time limit, since $\phi(A-t+\tau) = 0$ for $\tau<t-A$:
the time horizon is now finite.
The finite spatial extent of the truncated kernel now
controls the integrand's oscillation rate with respect to $\kv$,
so that using a Nyquist-spaced trapezoid quadrature
needs a wavenumber grid spacing of only $\bigO(1)$,
no matter how large $t$ is.

In short,
the truncated-kernel WFP (TK-WFP) method approximates 
the solution~\eqref{eq:mainSolnRep} to spectral accuracy
by a splitting into a history and local part,
with the history part computed in the Fourier transform domain
using $N = \bigO(1/\dt)$ quadrature nodes per dimension,
and the local
part computed directly. The amount of work required 
is $\Oh{M + N^3\log N}$ per time-step, dominated by the NUFFT.
$\Nt = T/\dt$ such steps are needed to reach the final simulation time $T$.
The storage requirements for the scheme as we present it are greater than
for the original WFP method, since we
will need to keep track of the time history for each source over the horizon
$[t-A,t]$ (see Remark~\ref{r:store}).
The splitting is discussed in detail in section \ref{sec:split}, 
the history part in
sections \ref{sec:history} and \ref{sec:histcomp},
and the local part in section \ref{sec:local}.
Section \ref{sec:results} illustrates the performance of the scheme with several
numerical examples.
We conclude with a brief discussion in Section~\ref{sec:conclusions}.
Two appendices contain the proof of the main theoretical
result (Theorem~\ref{thm:alphaDecay}) on spectral decay of Fourier coefficients.

\subsection{Prior work}

While we do not seek to present a thorough review of the literature, it should be
noted that a variety of algorithms have been developed to rapidly evaluate sums of the
form \eqref{eq:mainSolnRep} at $\Nt$ time steps in 
$\Oh{\Nt M\log^2 M}$ operations. Of particular relevance is the
plane-wave time domain method (PWTD)
of Ergin, Shanker, Michielssen and collaborators \cite{Ergin1998,Liu2020,CMS},
a time-domain version of the fast multipole method. It achieves more or less
optimal asymptotic performance, independent of whether the sources lie on a surface
or are more uniformly distributed in the computational domain. 
In \cite{Meng2010}, Meng et al.\ present an alternative that is easier to implement,
relying on an adaptive Cartesian grid and sparse interpolation methods 
to compute far-field interactions rather than relying on spherical harmonic expansions.
A related acceleration technique is the fast convolution quadrature algorithm
of  \cite{Banjai2014}, which combines the Laplace transform with 
${\cal H}$-matrices and the high-frequency fast-multipole method to achieve the same
complexity. 

Because the associated constants are rather large, however, the design of such fast
algorithms remains an active area of research. One such scheme is
the time-domain adaptive integral method \cite{Yilmaz2004} which, like ours,
is an FFT-based scheme, but does not evolve the Fourier transform of the solution.
Instead relying on the fact that the wave kernel is convolutional in space and time,
it uses multilevel, blocked space-time FFTs.
For layer potentials on surfaces,
the cost is of the order $\Oh{\Nt M^{3/2} \log M}$ operations. 
It is simpler to implement than the PWTD algorithm, with smaller associated constants
but suboptimal asymptotic scaling. The same is true for the method presented
here. 

Recently, hybrid time-frequency approaches have been proposed that can achieve excellent
performance under suitable conditions on the data.
These include \cite{Anderson2020,bruno2025} and the recent work 
of Wilber et al.~\cite{Wilber2025}. The latter combines contour deformation
and the fast sinc transform to overcome the oscillatory nature of the wave kernel
in the Fourier transform domain. Like the scheme of \cite{Banjai2014} mentioned above,
it makes use of fast solvers for a sequence of Helmholtz equations, rather than a 
marching scheme in the time domain. 

There is also, of course, a vast literature on solving the wave equation by direct 
discretization of the governing equation using finite difference
or finite element methods (e.g., \cite{nodalDGbook,kaltenbacher2018computational,Virieux12}),
combined with ``non-reflecting'' or ``absorbing'' boundary conditions
on a finite computational domain. Such boundary conditions are generally designed
as local approximations of the exact non-local exact radiation condition.
Examples of such conditions include those by Engquist and Majda~\cite{Engquist1977}, 
Bayliss and Turkel in~\cite{Bayliss1980}, and Higdon~\cite{Higdon1990}. 
These methods are generally low order accurate and tend to lose precision for waves
that are not normally incident to the artificial boundary.
For greater precision, absorbing region methods have been developed
that minimize numerical reflection by modifying the governing 
PDE itself by adding a damping term~\cite{Israeli1981}, or by a 
coordinate transformation, such as the Perfectly Matched Layer (PML) method~\cite{Berenger1994}. We refer the reader to the review articles
\cite{Givoli2008,Hagstrom1999,tsynkovrev} for further details.
Ongoing work in that direction
includes ideas from numerical relativity \cite{zenginoglu11} and geophysics \cite{kahana22}.
More recently, the double absorbing boundary (DAB) method and 
complete radiation boundary conditions (CRBC) were introduced by 
Hagstrom, Warburton and Givoli~\cite{Hagstrom2004, Hagstrom2009, Hagstrom2010,Potter2015}, 
which are able to accurately approximate the exact radiation condition to a 
specified error tolerance. They are especially convenient for coupling to finite
difference methods because they can be applied on rectangular boxes of arbitrary 
aspect ratio.
Exact outgoing conditions have also been developed for spherical boundaries 
\cite{Alpert2000,grotekeller}.

Finally, we should note the methods of 
Global Discrete Artificial Boundary Conditions \cite{tsynkov01}
and Time-Dependent Phase Space Filters~\cite{SOFFER2007, SOFFER2009, SofferStucchio}.
The former relies on the strong Huygens' principle
and requires the solution of an auxiliary problem on a larger domain. 
The latter detects microlocally outgoing 
components of the solution and deletes them from the representation using 
the Fourier transform at periodic intervals in time. Both can be carried out with 
high precision.

Note that our proposal's cost per timestep of $\bigO(N^3 \log N)$, $N$ being the highest frequency,
is, up to the log factor, the same order as that of a finite difference or element discretization
with a fixed number of grid-points per wavelength.
The latter is well known to suffer from dispersion errors unless the order is very high
\cite{bayliss85,Virieux12} \cite[\S4.6]{nodalDGbook}.
A major advantage of our method is that it achieves global spectral accuracy, without dispersion
error, through the use of the discretized Fourier transform.

\section{Windowed Fourier projection and the decomposition of hyperbolic potentials} \label{sec:split}

The windowed Fourier projection method (WFP) \cite{wfp2025}
is based on
a numerically smooth partition of unity using a blending function $\phi(t)$, with
$\phi(t) = 0$ for $t \leq 0$ and $\phi(t) = 1$ for $t \geq \delta$, with
the small parameter $\delta>0$ chosen of order the time-step.
Its precise value will be adjusted to balance
the local and history computational effort.

\subsection{The blending function}
\label{sec:blending}

As in \cite{wfp2025}, we exploit the
Kaiser--Bessel ``bump'' function $I_0(b\sqrt{1 - t^2})$, $t\in[-1,1]$, where $I_0$ 
is the zeroth-order modified Bessel function, while $b = \ln(1/\epsilon)$ is a 
shape parameter. This bump function has many applications in signal and image processing.
Its peak width scales as $b^{-1/2}$ while its peak value scales as $e^b$.
This is shifted and scaled to a unit-integral bump on the domain $[0,\delta]$, and its
antiderivative is taken:
\beq\label{eq:KBblending}
\phi(t) := \int_0^t\phi'(\tau)d\tau, \quad \text{where}\quad \phi'(t):= \begin{cases}\frac{b}{\delta\sinh b}I_0\left(b\sqrt{1 - (2t/\delta - 1)^2}\right), & 0\leq t\leq \delta, \\
0, & t<0\text{ or }t>\delta.\end{cases}
\eeq
This satisfies the following properties.
\begin{enumerate}
\item
The function $\phi$ transitions smoothly form $0$ to $1$ in $[0,\delta]$.
It is formally non-smooth on $\R$, there being
an $\Oh{e^{-b}}$ jump in $\phi'$ at $t = 0$ and $t = \delta$.
Thus the choice $b = \ln(1/\epsilon)$, for $\epsilon\ll 1$,
ensures that $\phi$
is \textit{numerically} smooth up to tolerance $\epsilon$. \label{itm:prop1}
\item The Fourier transform
of $\phi'$ is available analytically:
\beq
\hat{\phi'}(\omega) := \int_\mathbb{R}\phi'(t)e^{i\omega t}dt, 
= \frac{be^{-i\delta\omega/2}}{\sinh b}\sinc \sqrt{\left(\frac{\delta\omega}{2}\right)^2 - b^2}, \qquad \omega\in\mathbb R,
\label{eq:windowFT}
\eeq
where $\sinc(z) := \sin(z)/z$ for $z\neq0$ and $1$ otherwise.
The $\bigO(\epsilon)$-support of $\hat{\phi'}$ is $[-2b/\delta,2b/\delta]$,
as one may see by noting that the argument of the $\sinc$
is real outside of this domain (the $\sinc$ is exponentially large in the domain).
In other words, the $\epsilon$-bandlimit of $\phi'$ is $2b/\delta$.
\label{itm:prop2}
\end{enumerate}

Elaborating on the second property, we will need the following decay bound beyond the bandlimit, which is proved in \ref{sec:windowFourierProof}.
\begin{lem}\label{thm:windowFourierLem}
  Let $\delta>0$ and $0<\epsilon<1$.
  Then \eqref{eq:windowFT} with $b = \ln(1/\epsilon)$ obeys, for any
  $\theta>1$, the bound
\beq\label{eq:KB_bandlimit}
\bigl|
\hat{\phi'}(\omega)
\bigr|
< \frac{4b\theta}{\delta}\cdot\frac{\epsilon}{\abs{\omega}},
\qquad \text{for all }
\;\;
|\omega|\geq\frac{2b}{\delta\sqrt{1 - 1/\theta^2}}. 
\eeq
\end{lem}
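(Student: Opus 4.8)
The plan is to work directly from the closed form \eqref{eq:windowFT}, since it turns \eqref{eq:KB_bandlimit} into an exercise in elementary inequalities rather than oscillatory-integral analysis. Write $z := \sqrt{(\delta\omega/2)^2 - b^2}$. The hypothesis $\abs{\omega}\ge 2b/(\delta\sqrt{1-1/\theta^2})$ is strictly stronger than $\abs{\omega} > 2b/\delta$, so $z$ is real and positive and the $\sinc$ in \eqref{eq:windowFT} is evaluated on the real axis. Taking moduli (the factor $e^{-i\delta\omega/2}$ is unimodular), $\abs{\hat{\phi'}(\omega)} = \frac{b}{\sinh b}\,\abs{\sinc z} = \frac{b}{\sinh b}\cdot\frac{\abs{\sin z}}{z} \le \frac{b}{\sinh b}\cdot\frac{1}{z}$, using the trivial $\abs{\sin z}\le 1$.

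Two ingredients then remain: a lower bound on $z$ in terms of $\abs{\omega}$, and an upper bound on the Kaiser--Bessel normalization $b/\sinh b$ in terms of $\epsilon$. For the first, square the hypothesis to get $b^2 \le (\delta\omega/2)^2(1-1/\theta^2)$, hence $z^2 = (\delta\omega/2)^2 - b^2 \ge (\delta\omega/2)^2/\theta^2$, so $z \ge \delta\abs{\omega}/(2\theta)$ and $1/z \le 2\theta/(\delta\abs{\omega})$. For the second, substitute $b = \ln(1/\epsilon)$, so that $e^b = 1/\epsilon$ and $e^{-b} = \epsilon$, giving $\sinh b = \tfrac12(\epsilon^{-1}-\epsilon)$ and therefore $b/\sinh b = 2b\epsilon/(1-\epsilon^2)$, which for $0<\epsilon<1$ sits just above $2b\epsilon$. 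Multiplying the two estimates yields $\abs{\hat{\phi'}(\omega)} \le \frac{2b\theta}{\delta\abs{\omega}}\cdot\frac{2\epsilon}{1-\epsilon^2}$, which is \eqref{eq:KB_bandlimit} up to the harmless factor $1/(1-\epsilon^2)$ (it equals $1 + O(\epsilon^2)$ as $\epsilon\to0$, and may be carried into the constant or disposed of by restricting to $\epsilon$ bounded away from $1$, the only regime of practical interest).

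I expect the only step needing any thought to be the normalization bound: $b/\sinh b$ is exactly what supplies the factor $\epsilon$ in \eqref{eq:KB_bandlimit} (the $\sinc$ alone gives only $1/\abs{\omega}$ decay), so one must verify that it contributes a clean constant and does not interact badly with the $\sinc$ estimate. Everything else is routine — inserting the closed form, bounding $\abs{\sin z}\le 1$, and the algebra that extracts $z \ge \delta\abs{\omega}/(2\theta)$ from the frequency threshold — and no property of $I_0$ is used beyond the exact transform already recorded in \eqref{eq:windowFT}.
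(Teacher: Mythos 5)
Your proof follows essentially the same route as the paper's: take the modulus of the closed form \eqref{eq:windowFT}, bound $\abs{\sinc z}\le 1/z$ with $z=\sqrt{(\delta\omega/2)^2-b^2}$, use the frequency threshold to get $z\ge \delta\abs{\omega}/(2\theta)$ (equivalently $1/\sqrt{1-(2b/(\delta\omega))^2}\le\theta$), and control $b/\sinh b$ via $b=\ln(1/\epsilon)$. The residual factor $1/(1-\epsilon^2)$ that you flag is also present in the paper's own argument, where it is silently dropped, so your handling of the normalization is if anything slightly more careful, and the approach is otherwise identical.
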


\subsection{The local and history part}

Recall that the solution is represented as $u(\xv,t) = \ul(\xv,t) + \uh(\xv,t)$,
with the local part defined by \eqref{ul} and the history part
\beq\label{eq:historyPart}
\uh(\xv,t) =\sum_{j = 1}^M\int_{0}^t\calG_A(\xv - \yv_j,t - \tau)\sigma_j(\tau)\phi(t - \tau)d\tau,
\eeq
where $\CGA$ is the truncated kernel \eqref{eq:truncatedKernel}.

The local part \eqref{ul} involves an integral 
over the most recent times, and can be treated directly 
as in \eqref{eq:mainSolnRep2b} but for a spatially restricted region around each target:
\beq\label{eq:local}
\ul(\xv,t) = \sum_{j \in\calNd(\xv)}\frac{\sigma_j(t - r_j)[1 - \phi(r_j)]}{4\pi r_j},
\eeq
with the abbreviation $r_j: = \abs{\xv -  \yv_j}$, and the neighbor
index set definition
$
\calNd( \xv) := \{\ j \ | \ 0<r_j<\delta, \ j = 1, 2,\dots, M\}.
$
Note that we use $G(\xv,t)$ in \eqref{ul} since
for short times
it is equal to $\calG_A(\xv,t)$, for all $\xv\in B$,
because $\delta<A-\delta$.
The high-order
quadrature approximation of $\ul$ is then routine, and we postpone it
to Section~\ref{sec:local}.

The history part of the solution, $\uh$, on the other hand,
will be expressed as the Fourier integral
\beq\label{eq:uh_spectral}
\uh(\xv,t)  = \frac{1}{(2\pi)^3} \int_{\mathbb R^{3}} \alpha(\kv,t) e^{-i\kv\cdot \xv}d\kv, 
\eeq
with coefficients from \eqref{eq:uexactFtrunc} given by
\beq\label{eq:alphak}
\alpha(\kv,t) = \int_{t - A}^{t}\frac{\sin \kappa (t - \tau)}{\kappa}\sighat(\kv,\tau)\phi(t - \tau)\phi(A - t + \tau) d\tau,
\eeq
recalling the Fourier source function $S(\kv,\tau)$ defined in \eqref{eq:sighat}.
Its numerical approximation and fast computation occupies the next two sections.

\section{Approximation of the history part and choice of $\delta$} \label{sec:history}

\subsection{Discretization of the Fourier integral}

The Fourier integral \eqref{eq:uh_spectral} for $\uh$ may be discretized
using the 3D infinite trapezoidal rule with equal step size $\dk$:
\beq\label{eq:uh_discr}
\uh(\xv,t) = \frac{1}{(2\pi)^3} \int_{\mathbb R^{3}} \alpha(\kv,t) e^{-i\kv\cdot \xv}d\kv
\approx
\left(\frac{\dk}{2\pi}\right)^3 \sum_{\nv\in\mathbb Z^{3}}
\alpha(\nv\dk,t)e^{-i\nv\dk\cdot\xv},
\quad \xv\in B.
\eeq
Deferring the issue of truncation of this infinite sum to the next subsection,
we first discuss the discretization error.
Remarkably, once $\dk$ is below an explicit $\bigO(1)$ constant, there is no such error.
\begin{prop}
  Let $\dk \le 2\pi/(A+2)$, recalling \eqref{A}, and let the coefficients $\alpha(\kv,t)$ be given
  by \eqref{eq:alphak} with $S(\kv,\tau)$ given by \eqref{eq:sighat}.
  Then the discretization error is zero,
  that is, \eqref{eq:uh_discr} is an equality.
\end{prop}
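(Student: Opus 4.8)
The plan is to recognize the statement as an aliasing result. The infinite trapezoidal rule applied to an inverse Fourier transform introduces no discretization error of its own: by Poisson summation it replaces the target function by its periodization on the dual lattice, and that periodization coincides with the original function wherever no periodic copy folds back in. So the proposition reduces to a support estimate on $\uh(\cdot,t)$ in the $\xv$ variable, combined with the spacing hypothesis $\dk\le 2\pi/(A+2)$.

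First I would note that $\alpha(\kv,t)$ is precisely the spatial Fourier transform of $\uh(\cdot,t)$ --- this is \eqref{eq:uh_spectral} read through the inversion pair \eqref{eq:fourierdef}, or, directly, the transform of \eqref{eq:historyPart} using \eqref{eq:truncatedKernelSpectral} and \eqref{eq:sighat}. Setting $L:=2\pi/\dk$, the three-dimensional Poisson summation formula then identifies the right-hand side of \eqref{eq:uh_discr} with the $L\Z^{3}$-periodization of $\uh(\cdot,t)$,
\beq
\left(\frac{\dk}{2\pi}\right)^{3}\sum_{\nv\in\Z^{3}}\alpha(\nv\dk,t)\,e^{-i\nv\dk\cdot\xv}
\;=\;
\sum_{\mv\in\Z^{3}}\uh\bigl(\xv+L\mv,\,t\bigr),
\eeq
so it suffices to show that for $\xv\in B$ every term with $\mv\neq\boldsymbol 0$ vanishes.

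Next I would pin down the support. For each fixed $s$, the truncated kernel $\CGA(\cdot,s)=\phi(A-|\cdot|)\,G(\cdot,s)$ is supported in the open ball $\{|\xv|<A\}$, since $\phi$ vanishes on $(-\infty,0]$; concretely, collapsing the $\tau$-integral in \eqref{eq:historyPart} against the surface delta gives $\uh(\xv,t)=\sum_{j}\phi(r_j)\,\phi(A-r_j)\,\sigma_j(t-r_j)/(4\pi r_j)$ with $r_j:=|\xv-\yv_j|$, which is zero unless $r_j<A$ for some $j$. Since every $\yv_j\in B=[-1,1]^{3}$, the bound $r_j\ge|x_i|-|(\yv_j)_i|\ge|x_i|-1$ shows $\uh(\xv,t)=0$ whenever $|x_i|\ge A+1$ for some coordinate $i$; hence $\uh(\cdot,t)$ vanishes outside the open cube $(-(A+1),A+1)^{3}$. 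Then, for $\xv\in B$ and $\mv\neq\boldsymbol 0$, picking $i$ with $m_i\neq0$ and using $L=2\pi/\dk\ge A+2$ (with $A$ from \eqref{A}), $|x_i+Lm_i|\ge L-|x_i|\ge(A+2)-1=A+1$, so $\xv+L\mv$ lies outside the support of $\uh(\cdot,t)$ and contributes nothing. Only $\mv=\boldsymbol 0$ remains, the periodization collapses to $\uh(\xv,t)$, and \eqref{eq:uh_discr} is an equality; this is exactly where the sharp constant $A+2$ is needed.

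The delicate point --- the step I expect to require the most care --- is the analytic justification of Poisson summation, not the geometry. Because $\uh(\cdot,t)$ carries $1/r_j$ singularities at the source points $\yv_j$, its Fourier transform $\alpha(\cdot,t)$ decays only like $\kappa^{-3}$, so the trapezoidal sum converges merely conditionally (say over symmetric cubes), not absolutely, and the periodization identity must be read with care --- as an identity of $L$-periodic tempered distributions, which then holds pointwise at every $\xv$ where $\uh(\cdot,t)$ is continuous, i.e. at all $\xv\in B\setminus\{\yv_1,\dots,\yv_M\}$ (where in fact $\uh(\cdot,t)$ is $C^{\infty}$); alternatively, one regularizes the kernel, applies classical Poisson summation, and passes to the limit. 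Everything else --- the support computation and the inequality $L\ge A+2$ --- is elementary.
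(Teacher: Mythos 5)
Your proof is correct and follows essentially the same route as the paper: apply the (shifted, scaled) 3D Poisson summation formula to identify the trapezoidal sum with the $2\pi/\dk$-periodization of $\uh(\cdot,t)$, then use the radius-$A$ spatial support of $\CGA$ together with $\yv_j\in B$ to confine $\uh(\cdot,t)$ to the cube of half-width $A+1$, so that all shifted copies vanish for $\xv\in B$ when $2\pi/\dk\ge A+2$. Your closing remarks on the conditional convergence and the distributional reading of Poisson summation add rigor the paper leaves implicit, but they do not change the argument.
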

The proof relies on the finite spatial support of $\CGA$, as follows.
Shifting and scaling the 3D version of the Poisson summation formula
(e.g., \cite[Ch.~VII, Cor.~2.6]{steinweissbook}) gives
\[
\left(\frac{\dk}{2\pi}\right)^3
\sum_{\nv\in\mathbb Z^{3}}
\alpha(\nv\dk,t)e^{-i\nv\dk\cdot\xv}
\; =\; \uh(\xv,t) \; + \!\!
\sum_{\mv\in\mathbb Z^{3} \backslash \{{\bf 0}\}} \uh\biggl(\xv + \frac{2\pi}{\dk}\mv, t\biggr).
 \]
 The error is the 2nd term on the right-hand side.
 Since $\yv_j\in B$ for all $j$, while the spatial support of $\CGA$
 (recalling \eqref{eq:truncatedKernel}) lies within
 the ball of radius $A$, the spatial support of $\uh(\cdot,t)$ lives within the cube $[-A-1,A+1]^3$.
 Yet for $\xv\in B$, each spatial argument $\xv + (2\pi/\dk) \mv$ lies outside of this
 cube, since $2\pi/\dk \ge A+2$. Thus each term on the right-hand side sum is
 zero, and so is the error.
 
 This result may also be interpreted as the
 exactness of the periodic trapezoid quadrature when applied
 to an integrand that has bandlimit (with respect to each component of $\kv$)
 no more than $2\pi/\dk$.
 (For the periodic case see, for example, \cite[Cor.~3.3]{Trefethen2014}.)
Given the above, in practice we set
\beq
\dk \approx \frac{2\pi}{A + 2},
\label{dkprac}
\eeq
which is about $1.15$ when $\delta\ll 1$.

\subsection{Wavenumber truncation}

Next we determine a cut-off wavenumber magnitude $K$ such that
truncation of the above sum to the ball $|\kv|\le K$ incurs error of only $\bigO(\epsilon)$, where $\epsilon$ is a user-chosen tolerance.
This is supplied by the following.

\begin{theorem}\label{thm:alphaDecay}
  Let $\sigma_j\in L_2(\mathbb R_+)$,
  with $\|\sigma_j\|_1\le P$,
  $j=1,\dots,M$, be given source signature functions.
  Let the tolerance $0<\epsilon < 1$ and numerical bandlimit $K_0$ be
  such that the Fourier transforms $\hat{\sigma}_j(\omega)$ decay as
\beq
\abs{\hat{\sigma}_j(\omega)} \leq \frac{\epsilon}{\omega^2},
\quad \text{for all }|\omega|>K_0.
\label{sighatdecay}
\eeq
Let the blending timescale be $\delta>0$,
then let $\phi$ be defined as in \eqref{eq:KBblending}
with $b = \ln(1/\epsilon)$. Then, for each $\theta>1$,
the Fourier coefficients defined by \eqref{eq:alphak} obey the decay condition
\beq
\abs{\alpha(\kv,t)} = \frac{C M \epsilon}{\kappa^3},
\qquad \text{for all }\kappa>K := K_0 + \frac{2b}{\delta\sqrt{1 - 1/\theta^2}},
\; t\in[0,T],
\label{albnd}
\eeq 
recalling the notation $\kappa:=|\kv|$.
Here $C$ is some constant independent of $\epsilon$,
that depends only weakly on $K_0$, $b$, $\delta$, $\theta$, and $P$.
\end{theorem}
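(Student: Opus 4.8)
The plan is to read off $\alpha(\kv,t)$ as one value of the temporal Fourier transform of a product, and then bound that transform through a convolution. First I would substitute $s=t-\tau$ in \eqref{eq:alphak} and use $\sin\kappa s=(e^{i\kappa s}-e^{-i\kappa s})/(2i)$ to write
\[
\alpha(\kv,t)=\frac1{2i\kappa}\bigl(\widehat g(\kappa)-\widehat g(-\kappa)\bigr),\qquad g:=h_t\,\psi,\quad\widehat g(\omega):=\int_{\R}g(s)e^{i\omega s}\,ds,
\]
where $h_t(s):=S(\kv,t-s)=\sum_j\sigma_j(t-s)e^{i\kv\cdot\yv_j}$ and $\psi(s):=\phi(s)\phi(A-s)$ is the smooth plateau bump supported on $[0,A]$, so extending the $s$-integral from $[0,A]$ to $\R$ is harmless. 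It then suffices to show $|\widehat g(\omega)|\le C'M\epsilon/\omega^2$ for $|\omega|>K$, the remaining factor $1/(2\kappa)$ delivering \eqref{albnd}.

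Second I would assemble two ingredient bounds. Since $\widehat{h_t}(\omega)=e^{i\omega t}\sum_j e^{i\kv\cdot\yv_j}\widehat\sigma_j(-\omega)$, the hypothesis $\|\sigma_j\|_1\le P$ gives the crude uniform bound $|\widehat{h_t}(\omega)|\le MP$, while \eqref{sighatdecay} gives $|\widehat{h_t}(\omega)|\le M\epsilon/\omega^2$ for $|\omega|>K_0$. For the window, because $A-\delta\ge\diam B>\delta$ the support of $\phi'$ meets only the plateaux of the reflected copy of $\phi$, so $\psi'(s)=\phi'(s)-\phi'(A-s)$; hence $\widehat{\psi'}(\omega)=\widehat{\phi'}(\omega)-e^{i\omega A}\widehat{\phi'}(-\omega)$ and $|\widehat\psi(\omega)|=|\widehat{\psi'}(\omega)|/|\omega|\le 2|\widehat{\phi'}(\omega)|/|\omega|$. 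Lemma~\ref{thm:windowFourierLem} then yields $|\widehat\psi(\omega)|<(8b\theta/\delta)\,\epsilon/\omega^2$ for $|\omega|\ge\omega_b:=2b/(\delta\sqrt{1-1/\theta^2})$, alongside the trivial bound $|\widehat\psi(\omega)|\le\|\psi\|_1\le A$.

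Third I would estimate $\widehat g=(2\pi)^{-1}\widehat{h_t}\ast\widehat\psi$ at a frequency $\kappa>K=K_0+\omega_b$. The key point is that for every $\eta$ at least one of $|\eta|>K_0$ or $|\kappa-\eta|>\omega_b$ must hold, since otherwise $\kappa\le|\eta|+|\kappa-\eta|\le K_0+\omega_b=K$. So I split the $\eta$-line into: (i) $|\eta|\le K_0$, where $\widehat{h_t}$ is merely bounded but $|\kappa-\eta|\ge\kappa-K_0>\omega_b$, so $\widehat\psi(\kappa-\eta)$ carries the $\epsilon/(\kappa-\eta)^2$ decay; (ii) $|\kappa-\eta|\le\omega_b$, where $\widehat\psi$ is merely bounded but $|\eta|\ge\kappa-\omega_b>K_0$, so $\widehat{h_t}(\eta)$ carries the $\epsilon/\eta^2$ decay; and (iii) $|\eta|>K_0$ and $|\kappa-\eta|>\omega_b$, where both factors decay and one may additionally split at $\eta=\kappa/2$. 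Piece (i) reduces to $\int_{-K_0}^{K_0}(\kappa-\eta)^{-2}d\eta$, piece (ii) to an integral of length $2\omega_b$ with integrand $O(\epsilon/(\kappa-\omega_b)^2)$, piece (iii) to $\int\eta^{-2}(\kappa-\eta)^{-2}d\eta$ (with an extra, harmless $\epsilon^2$); each is at most a constant times $M\epsilon/\kappa^2$, the constant depending on $K_0,b,\delta,\theta,P$ but not on $\epsilon$. Since $\kappa^2/(\kappa-K_0)^2$ and $\kappa^2/(\kappa-\omega_b)^2$ are decreasing on $\kappa>K$, they are controlled by their values at $\kappa=K$, so the estimate holds uniformly for $\kappa>K$. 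The same bounds apply to $\widehat g(-\kappa)$, and dividing by $|2\kappa|$ gives \eqref{albnd} with $C$ collecting the constants above.

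I expect the main obstacle to be this last convolution estimate: all the integration regions and the usable lower bounds on $|\eta|$ and $|\kappa-\eta|$ depend on $K_0$ and on $\omega_b\sim b/\delta$, so one must check both that the constant stays genuinely independent of $\epsilon$ and that the $1/\kappa^2$ decay is not lost near the threshold $\kappa\downarrow K$ — both handled by the monotonicity remark above. By comparison the two ingredient bounds are routine, the only care needed being the inequality $A-\delta>\delta$ that makes the plateau identity $\psi'=\phi'(\cdot)-\phi'(A-\cdot)$ exact.
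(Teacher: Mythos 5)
Your proposal is correct and follows essentially the same route as the paper's proof: rewrite $\alpha(\kv,t)$ via complex exponentials as $\tfrac{1}{2i\kappa}$ times values of the temporal Fourier transform of the product of the source signal with the plateau window $\psi(\tau)=\phi(\tau)\phi(A-\tau)$, bound $\hat\psi$ through $\psi'=\phi'(\cdot)-\phi'(A-\cdot)$ and Lemma~\ref{thm:windowFourierLem}, and estimate the resulting convolution by the same three-region splitting (with the further split at $|\eta|=\kappa/2$ where both factors decay). The only cosmetic difference is bookkeeping: the paper packages the convolution estimate as a standalone lemma and uses $L^1$ norms of $\hat S$ and $\hat\psi$ on the non-decaying regions, where you use the sup bounds $MP$ and $A$ together with the monotonicity remark, which yields the same conclusion and essentially the same constant.
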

This implies that the coefficients $\alpha(\kv,t)$ are of size $\bigO(\epsilon)$,
with rapid algebraic decay, beyond a numerical wavenumber cut-off $K$.
This cut-off need only be slightly larger than
the $\epsilon$-bandlimit $K_0$ of the
given signals plus the $\epsilon$-bandlimit $2b/\delta$ of the blending function.
(By choosing $\theta$ sufficiently large, $K$ may be pushed arbitrarily close
to this sum at the cost of only mild growth in the error prefactor.)

The proof of the above theorem is given in \ref{sec:alphaDecayProof},
which also states the explicit formula for $C$ in \eqref{albnd}.
The proof combines the decay of the signals
outside their $\epsilon$-bandwidth $K_0$, the decay of $\hat{\phi'}$ outside
its $\epsilon$-bandwidth (Lemma~\ref{thm:windowFourierLem}),
the convolution theorem, and the fact that the wave kernel
$\sin[\kappa(t-\tau)]/\kappa$ converts temporal to spatial bandlimitness.
The technical condition \eqref{sighatdecay} is merely a convenient expression
of the signal being $\epsilon$-bandlimited; Gaussians and other common
smooth signals will have tails that decay faster than quadratically in $\omega$.
Note that, although one could prove a much simpler theorem by assuming
that $\sigma_j$ is \textit{strictly} bandlimited (hence analytic on $\R$),
this assumption would only admit trivial data $\sigma_j\equiv 0$ via
the need for $\sigma_j(t)=0$ for all $t<0$; such a theorem would not be interesting!

\begin{remark}[Bound on the truncation error]
  Although Theorem \ref{thm:alphaDecay} shows that the coefficients $\alpha(\kv,t)$
  are uniformly $\bigO(\epsilon)$ in $|\kv|\ge K$, unfortunately
  their $1/\kappa^3$ wavenumber decay rate is not quite fast enough
  to prove summability of the error
 $\sum_{\nv\in\Z^3: |\nv\dk|>K} \alpha(\nv\dk,t) e^{-i\nv\dk\cdot\xv}$
  induced by truncation in $d=3$
  dimensions, at least not via a bound on $|\alpha|$.
  This limitation is due to the $\epsilon/|\omega|$ tail bound for
  $\hat{\phi'}$, in turn tied to the $\bigO(\epsilon)$
  discontinuities in $\phi'$ at $0$ and $\delta$.
  One may be able to fix this theoretical deficit by switching to the ``deplinthed''
  (continuous) Kaiser--Bessel window as in \cite{dftsubmat}.
  However, the numerical performance is expected to be almost identical.
\end{remark}

Motivated by the above theorem, we thus truncate the infinite sum \eqref{eq:uh_discr}
to the finite ball $|\kv|\le K$, whose radius in practice we set as
\beq
K = K_0 + \frac{2b}{\delta},
\label{Kprac}
\eeq
the large-$\theta$ limit of \eqref{albnd}. Thus, combining the results
of this section so far, with the above choices of
$\dk$, $b$, and $K$, we expect that
$\uh$ is approximated to error $\bigO(\epsilon)$ by the discrete sum
\beq
\uh(\xv,t)
\approx
\left(\frac{\dk}{2\pi}\right)^3 \sum_{\nv\in\Z^3: \, |\nv \dk| \le K}
\alpha(\nv\dk,t)e^{-i\nv\dk\cdot\xv},
\quad \xv\in B.
\label{eq:uh_approx}
\eeq

\subsection{Choice of blending timescale}

Given the tolerance $\epsilon$ and the signal $\epsilon$-bandwidth $K_0$,
it remains to set the blending parameter $\delta$.
As in \cite{wfp2025}, we set $\delta = W\dt$,
where $\dt$ the time-step with which the history coefficients
are evolved, and $W$ is a small integer.
We choose
\beq
W:=\biggl\lceil \frac{2b}{\pi\gamma}\biggr\rceil
,
\label{W}
\eeq
where $\gamma\in(0,1)$ is a dimensionless parameter,
and note that $W$ grows as $\log(1/\epsilon)$ as more precision is demanded.
We then have $2b/\delta \approx \pi \gamma/\dt$.
Thus \eqref{Kprac} becomes $K \approx K_0 + \frac{\pi \gamma}{\dt}$.
This indicates that $\gamma$ is the fraction of the time-step Nyquist bandlimit
$\pi/\dt$ sacrificed to the blending function,
the remaining fraction being accounted for by the signal bandlimit $K_0$.
Note that reducing $\gamma$ decreases $K$ and, therefore,
the cost associated with the history part,
but increases $W$ and $\delta$ and, therefore,
the amount of work required in the local part. Thus, $\gamma$ can be used to adjust
the history/local workload balance.
We typically choose $\gamma \approx 0.5$.

Note also that if $\dt$ were set much smaller than the Nyquist value for the signal,
$\pi/K_0$, the bandlimit (hence cost) of the history part would be excessively large.
Fortunately, with a high order method, the solution converges rapidly once
$\dt < \pi/K_0$. Setting $\dt = \pi/(2K_0)$, for example, with $\gamma=0.5$,
results in $K = 2K_0$, an acceptable increase in the bandlimit.

\section{Computation of the history part} \label{sec:histcomp}

The reason that the history part can be evaluated efficiently is that
the Fourier coefficients $\alpha(\kv,t)$ satisfy evolution formulae and 
can be computed recursively at each time step.
In particular, it is easy to check that, independently at each wavenumber $\kv$,
they satisfy the ODE initial value problem
\beq\label{eq:alphakODE}
\begin{cases}
\ddot\alpha(\kv,t) + \kappa^2\alpha(\kv,t) = F(\kv,t), & t>0, \\
\alpha(\kv,0) = \dot\alpha(\kv,0) = 0, &
\end{cases}
\eeq
using the notation $\dot\alpha := \partial_t \alpha$, with forcing
\beq\label{eq:alphakF}
F(\kv,t) = \int_{t - \delta}^{t}\left[ \Psi(\kv,t - \tau)\sighat(\kv,\tau) - \Psi_A(\kv,t - \tau)\sighat(\kv,\tau - A + \delta)\right]d\tau ,
\eeq
where
\beq
\begin{split}
\Psi(\kv,\tau)&:= 2\cos\kappa\tau \phi'(\tau) + \frac{\sin\kappa\tau}{\kappa}\phi''(\tau), \\
\Psi_A(\kv,\tau)&:= 2\cos\kappa(\tau + A - \delta) \phi'(\tau) + \frac{\sin\kappa(\tau + A - \delta)}{\kappa}\phi''(\tau).
\end{split}
\eeq
This is done for each discrete $\kv = \nv \dk$ with $|\kv|\le K$, as in the previous section.
In these and the below expressions, the case $\kv={\bf 0}$ 
is taken as the limit $\kappa \to 0$, so that $(\sin \kappa \tau)/\kappa$
becomes $\tau$, etc.

\begin{remark}\label{r:crea}
We refer to $\Psi$ as the {\em creation} 
kernel, since it is responsible for injecting contributions to $\alpha(\kv,t)$
near the current time $t$ to balance the disappearing local part;
this is as in \cite{wfp2025}.
We refer to the new term $\Psi_A$ as the {\em annihilation} kernel since it is responsible for removing 
the historical contributions to $\alpha(\kv,t)$ from times earlier than $t-A$.
\end{remark}

The solution to \eqref{eq:alphakODE} at each $\kv$ is
\beq
\alpha(\kv,t) = \int_0^t \frac{\sin \kappa (t-\tau)}{\kappa} F(\kv,\tau) d\tau.
\label{alF}
\eeq
One may thus interpret \eqref{eq:alphakF}
as the driving such that the coefficients $\alpha$ obey the simple 2nd-order 
ODE~\eqref{eq:alphakODE}, enabling the standard Duhamel-type solution~\eqref{alF}
in contrast to the windowed evaluation formula~\eqref{eq:alphak}.

The following can be easily derived from \eqref{alF}, or from the Duhamel principle applied
to the ODE~\eqref{eq:alphakODE}.
\begin{lem}
Let $\dt>0$ denote a time step. The Fourier coefficients $\alpha(\kv,t)$ satisfy the evolution fomulae
\beq\label{eq:alphakEvolution}
\begin{split}
\alpha(\kv,t + \dt) \;&=\; \alpha(\kv,t)\cos(\kappa\dt) + \dot \alpha(\kv,t) \frac{\sin\kappa\dt}{\kappa} + h(\kv,t), \\
\dot\alpha(\kv,t + \dt) \;&=\; -\kappa\alpha(\kv,t)\sin(\kappa\dt) + \dot \alpha(\kv,t) \cos\kappa\dt+ g(\kv,t), \\
\end{split}
\eeq
where
\beq
h(\kv,t) := \int_t^{t + \dt} \frac{\sin\kappa(t + \dt - \tau)}{\kappa} F(\kv,\tau)d\tau,
\quad 
g(\kv,t) := \int_t^{t + \dt} \cos\kappa(t + \dt - \tau) F(\kv,\tau)d\tau.
\label{hg}
\eeq
\end{lem}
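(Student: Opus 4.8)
The plan is to obtain both evolution formulae directly from the Duhamel representation \eqref{alF}, using only the angle-addition identities for $\sin$ and $\cos$ together with one differentiation under the integral sign.

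First I would record the companion formula for the velocity. Differentiating \eqref{alF} in $t$ by the Leibniz rule, the boundary contribution at $\tau = t$ drops out because $\sin\kappa(t-\tau)/\kappa$ vanishes there, leaving
\[
\dot\alpha(\kv,t) \;=\; \int_0^t \cos\kappa(t-\tau)\,F(\kv,\tau)\,d\tau .
\]
(As elsewhere, the $\kappa\to 0$ case is read as a limit and causes no difficulty.)

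Next I would split the integral defining $\alpha(\kv,t+\dt)$ at the intermediate time $\tau = t$, writing it as $\int_0^t(\cdots) + \int_t^{t+\dt}(\cdots)$. In the first piece I expand $\sin\kappa(t+\dt-\tau) = \sin\kappa(t-\tau)\cos\kappa\dt + \cos\kappa(t-\tau)\sin\kappa\dt$ and pull the $\tau$-independent factors $\cos\kappa\dt$ and $\sin\kappa\dt$ outside; the two remaining integrals are precisely $\alpha(\kv,t)$ and $\dot\alpha(\kv,t)/\kappa$ by \eqref{alF} and the velocity formula just derived. The second piece is, by definition, $h(\kv,t)$. This produces the first line of \eqref{eq:alphakEvolution}. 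The second line follows in exactly the same way: start from the velocity formula evaluated at $t+\dt$, split at $\tau=t$, and use $\cos\kappa(t+\dt-\tau) = \cos\kappa(t-\tau)\cos\kappa\dt - \sin\kappa(t-\tau)\sin\kappa\dt$; the tail integral over $[t,t+\dt]$ is $g(\kv,t)$.

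There is no genuine obstacle here: the only steps needing a word of care are the vanishing of the Leibniz boundary term above and the routine check that the $\kappa=0$ limit is recovered. An equivalent, slightly more conceptual route would be to observe that for $s\ge t$ the map $s\mapsto(\alpha(\kv,s),\dot\alpha(\kv,s))$ solves the same ODE \eqref{eq:alphakODE} with data prescribed at $s=t$, so its value at $s=t+\dt$ is given by the standard variation-of-parameters solution with fundamental pair $\cos\kappa(s-t)$, $\sin\kappa(s-t)/\kappa$ --- i.e.\ the semigroup property exploited throughout the paper --- and setting $s-t=\dt$ and reading off the two components gives \eqref{eq:alphakEvolution}.
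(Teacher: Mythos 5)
Your argument is correct and is essentially the paper's intended derivation: the paper states only that the lemma ``can be easily derived from \eqref{alF}, or from the Duhamel principle applied to the ODE~\eqref{eq:alphakODE}'', and you carry out both routes, with the split of the Duhamel integral at $\tau=t$, the angle-addition identities, and the vanishing boundary term all handled correctly.
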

Thus by storing the pair $\alpha$ and $\dot\alpha$
at each $\kv$, their exact update over one time step
takes the form of a $2\times 2$ matrix-vector multiply plus a known vector.

Gauss--Legendre quadrature over the interval $[t,t+\dt]$
can be used to approximate $h$ and $g$ in \eqref{hg} to high order accuracy.
In turn the integral over $[t-\delta,t]$ needed for $F$ in \eqref{eq:alphakF}
may be approximated using the trapezoid rule on the time-stepping grid itself.
By exchanging the order of these integrals, one may precompute weights
$p_m(\kv)$ and $q_m(\kv)$ such that for a current time $t$ on the time grid,
\beq
\begin{aligned}
  h(\kv,t) &\approx \dt \sum_{m=0}^{W-1} p_m(\kv) \sighat(\kv,t-m\dt)
  - p^{(A)}_m(\kv) \sighat(\kv,t-A+\delta-m\dt),
  \\
  g(\kv,t) &\approx \dt \sum_{m=0}^{W-1} q_m(\kv) \sighat(\kv,t-m\dt)
  - q^{(A)}_m(\kv) \sighat(\kv,t-A+\delta-m\dt),
\end{aligned}
\label{hggrid}
\eeq
holds to high order accuracy.
See \cite[\S3]{wfp2025} for the formulae for $p$ and $q$;
the formulae for $p^{(A)}$ and $q^{(A)}$ are analogous.
Values of $\sighat$ for $t<0$ are assumed to be zero.

\subsection{Computation and storage costs}

At each time step, $\sighat(\kv,t)$ must be computed
from \eqref{eq:sighat}, for all wavevectors on the grid $\kv = \nv \dk$, $\nv\in\Z^3$,
with $|\kv|\le K$.
This is approximated to relative tolerance $\epsilon$ by a single application of a NUFFT
(Non-Uniform fast Fourier transform) of type 2
\cite{finufft,finufftlib}, outputting the cube of $N^3$ coefficients
where $N := \lceil \frac{2K}{\dk} \rceil$ is the maximum number of wavevector quadrature points
per dimension.
The values with wavevectors in this cube but outside the ball $|\kv|=K$ are then set to zero.
This NUFFT requires
$\Oh{\log^3(1/\epsilon) M + N^3\log N}$ work.
The subsequent calculation of $h(\kv,t)$ and $g(\kv,t)$ at each time step using \eqref{hggrid}
takes a linear combination of $\sighat(\kv,\tau)$ for $\tau$ ranging over
the most recent $W$ time-steps and the $W$ time steps preceding $t-A$.

The Fourier coefficients $\alpha(\kv,t)$ are then updated via
\eqref{eq:alphakEvolution}. Finally, the evaluation of $u_h$ 
via \eqref{eq:uh_approx}
at $N_x$ target points $\{\xv_i\}_{i=1}^{N_x}$ takes the form of a type 1 NUFFT, which can
be performed at a cost $\Oh{\log^3(1/\epsilon)N_x + N^3\log N}$ at each time step.
Again, here the $\alpha(\kv,t)$ values for $|\kv|>K$ are padded to zero, since this NUFFT
acts on a $N^3$ cubical input coefficient array.

\begin{remark}[History storage costs]
  \label{r:store}
Since the annihilation kernel is applied to historical data up to time $A$ in the past,
we need to store either  $\sighat(\kv,\tau)$ for all time steps in the interval $[t-A,t]$, or the
source signals $\sigma_j(\tau)$, from which 
$\sighat(\kv,\tau)$ can be reconstructed by a NUFFT on the fly.
Data from time steps before time $t-A$ can be safely deleted.
We note that regrouping via a partition of unity in time
(as in, e.g., \cite{tsynkov01})
could much reduce this storage, at the cost of a slight increase in $A$.
\end{remark}

\section{Computation of the local part} \label{sec:local}
In this section we discuss the numerical computation of the local part of the solution, $\ul$, in~\eqref{eq:local}:
\beq
\ul(\xv_i,t) = \sum_{j \in\calNd(\xv_i)}\frac{\sigma_j(t - r_{ij})[1 - \phi(r_{ij})]}{4\pi r_{ij}},
\eeq
for target points $\xv_i$, $i = 1, \dots, \Nx$, and source points $\yv_j$, 
$j = 1, \dots, M$. Recall that
$r_{ij} = \abs{\xv_i - \yv_j}$, and $\calNd(\xv_i)$ denotes the set of source indices
with distances in $(0,\delta)$ from $\xv_i$;
this excludes any sources that coincide with the target.
Note that if $\xv_i=\yv_j$ for some source $j$,
the above expression results in a total potential $u = \ul+\uh$
in \eqref{eq:mainSolnRep2b} which excludes the self-interaction.
Assuming that the densities $\sigma_j$ are available at all retarded time values, $(t - r_{ij})$, we seek to express $\ul$ at each target point as a dot product of two vectors. For this, we precompute the set 
\beq
Q_{ij} = \frac{1 - \phi(r_{ij})}{4\pi r_{ij}}, \quad i = 1, \dots, \Nx, \ j \in\calNd(x_{i}).
\eeq
Then, at each new time $t$, these are used to weight the signal samples:
\beq\label{eq:ul_compute}
\ul(\xv_i,t) = \sum_{j\in\calNd(\xv_i)}\sigma_j(t - r_{ij})Q_{ij}, \qquad i = 1, \dots, \Nx.
\eeq
The cost of computing $\ul$ at $\Nx$ targets and $M$ sources scales linearly with
the typical number of sources within a $\delta$-neighborhood of each target;
the latter we denote by $\Mtyp$.
The local cost for evaluation at a single time $t$ is thus $\Oh{\Nx\Mtyp}$ flops.
For a quasi-uniform volumetric distribution of sources and targets,
$\Mtyp \approx 4\pi\delta^3 M/(3V_B)$, recalling the volume of the sphere of radius $\delta$, and using $V_B = 8$, the volume of the computational domain $B$.
For a quasi-uniform distribution on a surface, $\Mtyp$ instead scales as $\delta^2M$ for target points on the surface (relevant for the time-domain integral equation setting), while for distant off-surface evaluation one may have $\Mtyp=0$.

\begin{remark}
  To find sources within $\calNd(\xv_{i})$ of a given target $\xv_i$ in a fast manner, one may sort source and target points into cubic boxes of side-length $\delta$ and search neighboring boxes only. For example, to evaluate $\ul$ at a target point $\xv_i$ in box $b_i$, we find sources in $\calNd(\xv_i)$ by searching the boxes that share a vertex, edge, or a face with $b_i$. This reduces the cost of the search for filling all elements of $Q$
  from $\Oh{\Nx M}$ to $\Oh{\delta^3\Nx M}$. 
\end{remark} 

\begin{remark}
  For time-domain wave scattering schemes, $\sigma_j$ may only be available on the uniform time grid. In this case, high-order interpolation from the uniform time grid is needed to
  approximate $\sigma_j(t - r_{ij})$, as in \cite{wfp2025}. In the present paper,
  for simplicity, we assume that $\sigma_j$ is a given function that may be rapidly evaluated
  at any argument.
\end{remark}

\section{Numerical results} \label{sec:results}

We present numerical examples to verify the high-order accuracy and computational performance of the TK-WFP algorithm. 
In Section~\ref{sec:eightSources}, we demonstrate exponential convergence with respect to $\dt$, using a small number of source points.
In Section~\ref{sec:cruller}, we investigate the algorithm's performance in wave-scattering scenarios with many sources and targets covering a closed surface.
Section~\ref{sec:randomSources} explores the accuracy and speed of the TK-WFP solution approach to a problem with a wide range of frequency settings and a large number of sources and targets.

In each of the numerical examples, we approximate the wave equation solution $u$ to \eqref{eq:freeSpace} using TK-WFP with a constant step size $\dt$.
We denote the resulting approximate solution by $\tilde{u}$.
We fix the error tolerance $\epsilon = 10^{-6}$.
We measure the error at a given time $t$ by evaluating the exact solution $u$ directly using
\eqref{eq:mainSolnRep2b}.
We will report the maximum norm of the absolute, or relative error, defined respectively as
\beq
\calE_{\dt} := \norm{\tilde u - u}_{\infty}, \qquad \tilde\calE_{\dt} := \frac{\calE_{\dt}}{\norm{u}_\infty}, 
\eeq
for a particular $\dt$ parameter.
The norms are estimated as the maximum over a large number
of target points $\xv_i$ at the final time $t = T$. 

We clarify that evaluating the solution $u$ at the last time slice $t=T$ requires computing the Fourier coefficients $\alpha(\kv,n\dt)$, for each $\abs{\kv} \le K$, on the regular grid $n = 0, 1, \dots, \Nt$, recalling $\Nt\dt = T$.
We thus differentiate between \textit{time steps} (of size $\dt$), with which these coefficients are evolved, and \textit{time slices} $t$, where we choose to evaluate the full
solution via $u=\ul + \uh$.

The TK-WFP and direct evaluations were implemented in MATLAB (version
R2023b), with NUFFTs performed by calling the MATLAB interface to
the C++ library FINUFFT (version 2.4.1). MATLAB sparse matrices were used to
manipulate the set of entries $Q_{ij}$ for the local part evaluation.

\subsection{Convergence study}\label{sec:eightSources}
We begin with a simple example where we evaluate the solution representation $u$ in~\eqref{eq:mainSolnRep} with a few point sources using the TK-WFP method,
to test that the free-space radiative boundary conditions are correct.
Let the source points $\yv_j$ be located at each of the eight corners $(\pm1,\pm1,\pm1)$
of the cube $B$;
this excites the most severe grazing incident waves
(these are known to be problematic when using approximate radiation boundary conditions).
We choose identical time signatures
\beq
\sigma_j(t) = 0.5\left[\erf(5(t - 1.5)) + 1\right]\sin(30\pi(t-1.5)), \qquad j = 1, 2, \dots, 8,
\eeq
which take the form of a fixed high frequency at $30\pi$ modulated by a smooth
``ramp up'' erf blending function.
At a tolerance $\epsilon=10^{-6}$ the $\epsilon$-bandlimit is $K_0 \approx 131$;
this is $30\pi$ plus the $\epsilon$-bandlimit of the erf.
For the latter we use the estimate that the $\epsilon$-bandlimit of $\erf(\mu t)$ is
around $2 \mu \sqrt{\log(1/\epsilon)}$, derived by the Fourier transform of
its derivative (the Gaussian $e^{-(\mu t)^2}$, up to an $\bigO(1)$ prefactor).
At the center signal frequency $30\pi$, the size of the cube is $30$ wavelengths on a side;
at the bandlimit $K_0$, this is $42$ wavelengths.
We fix the Nyquist fraction for the blending $\gamma = 0.5$. The width \eqref{W} of the blending region is then $W = 18$ time steps, recalling that $\delta = W\dt$.

Figure~\ref{fig:exp6} (left) shows the computed solution $\tilde u$ at a $100\times 100 \times 100$ mesh of target points uniformly covering $B$, at a final time $T = 6$, and with time step $\dt =0.0102$.
Figure~\ref{fig:exp6} (right) presents a 2D slice of that solution at $z = -0.01$. We observe clean outgoing waves at the boundary of $B$.
The absolute error of this solution is
$\calE_{\dt} =9.8\times10^{-7}$.

In Figure~\ref{fig:exp6_conv} we show, for the same example,
convergence with respect to $\dt$ of the absolute error.
The error is estimated by evaluating $\tilde u(\xv,t)$ over a $10\times 10\times 10$
uniform mesh of target points covering $B$, at $t=6$.
At each $\dt$, the maximum wavenumber is set as $K=\pi/\dt$, rather than
fixing $K$ using~\eqref{Kprac}. The cut-off distance is $\delta=0.1831$.
The plot demonstrates spectral convergence of the TK-WFP method, down to
saturation around the chosen 6-digit tolerance.
The number of sources is too small to report meaningful CPU timings;
for that we move to the following larger examples.

\begin{figure}[th]
	\centering        
	\includegraphics[width=7cm]{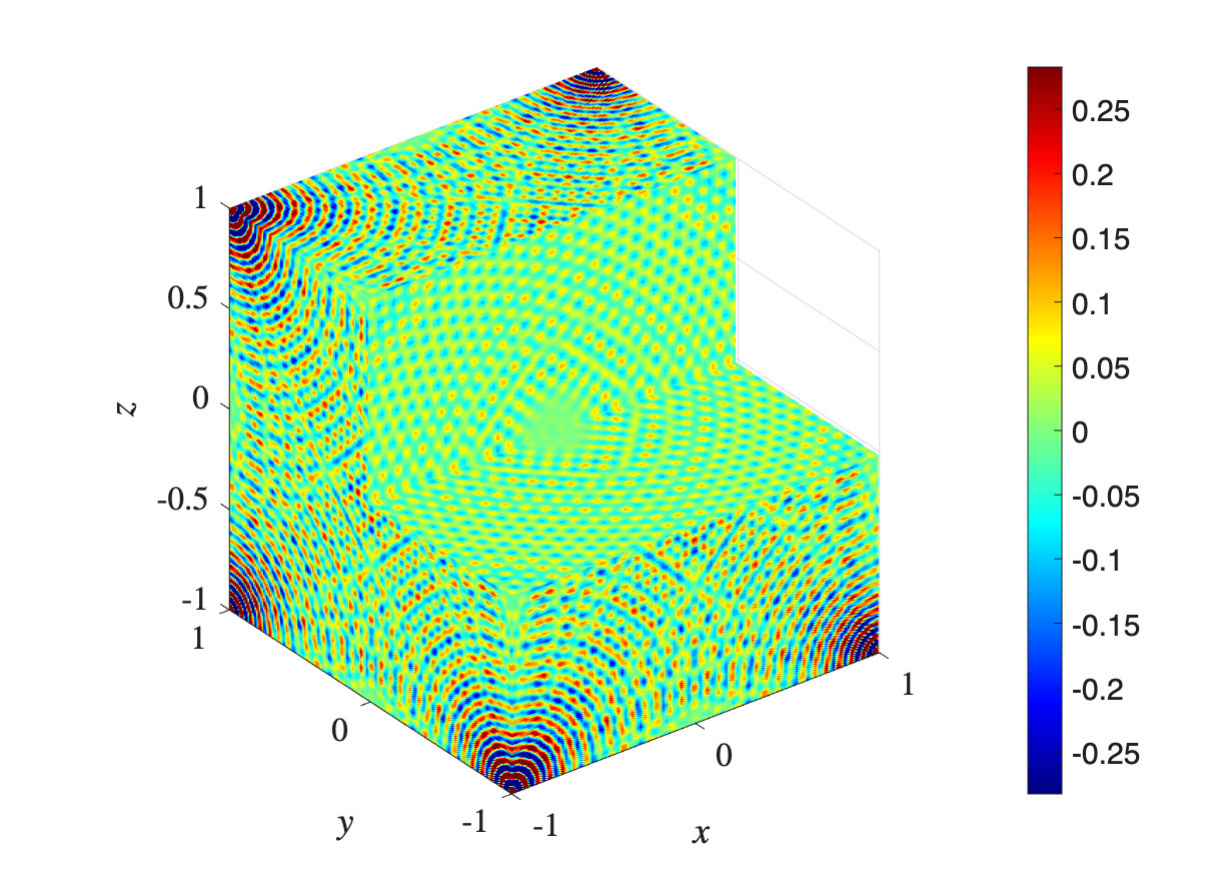}
        \includegraphics[width=7cm]{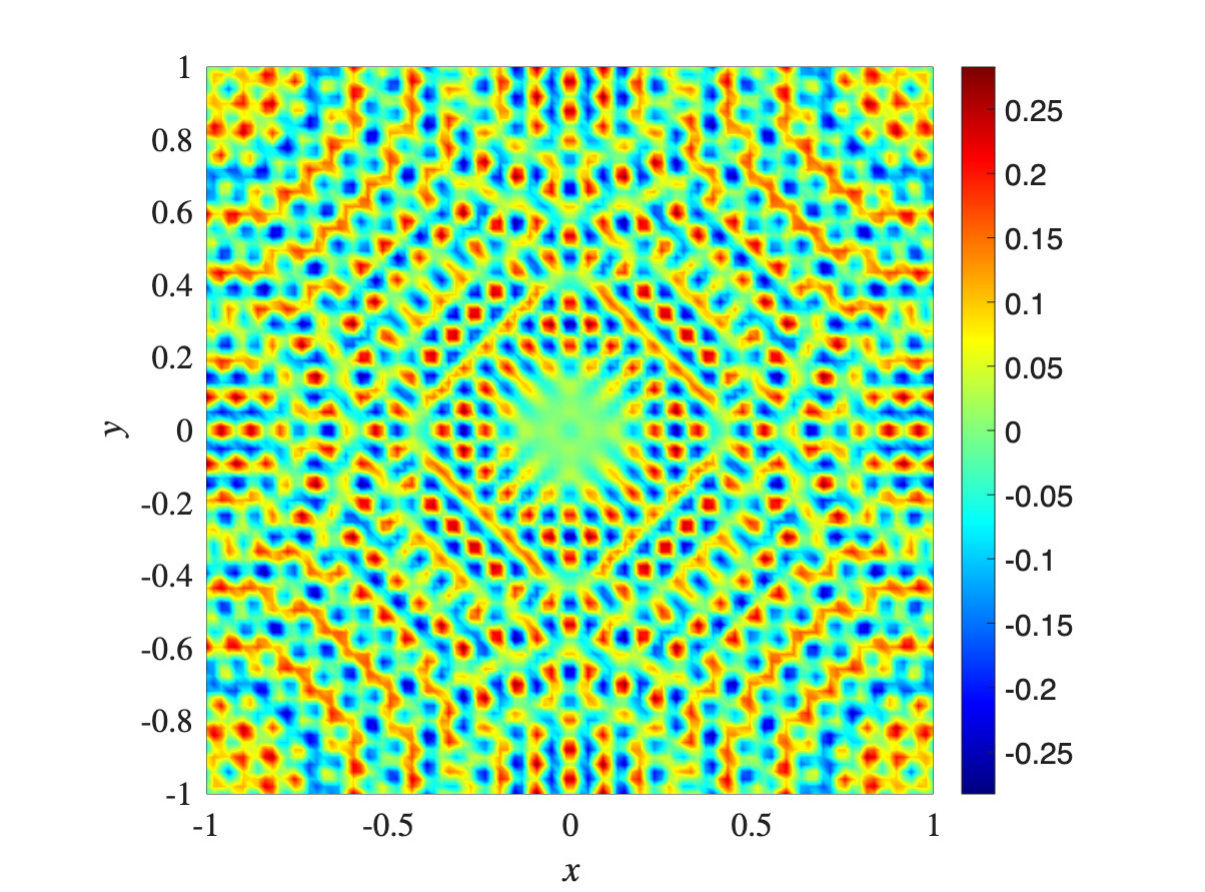}
	\caption{Computed solution due to eight sources as in Section~\ref{sec:eightSources},
          at time $t = 6$, using $\dt=0.0102$. The same potential $\tilde u$ is shown as a cut-away of the volume (left), and on the plane $z = -0.01$ (right). At its center frequency (visible as the
          oscillations in the plot) the wave field has 30 wavelengths on each side of the cube $B$. The solution has 6 accurate digits.}
	\label{fig:exp6}
\end{figure}

\begin{figure}[th]
	\centering
        \includegraphics[width=8cm]{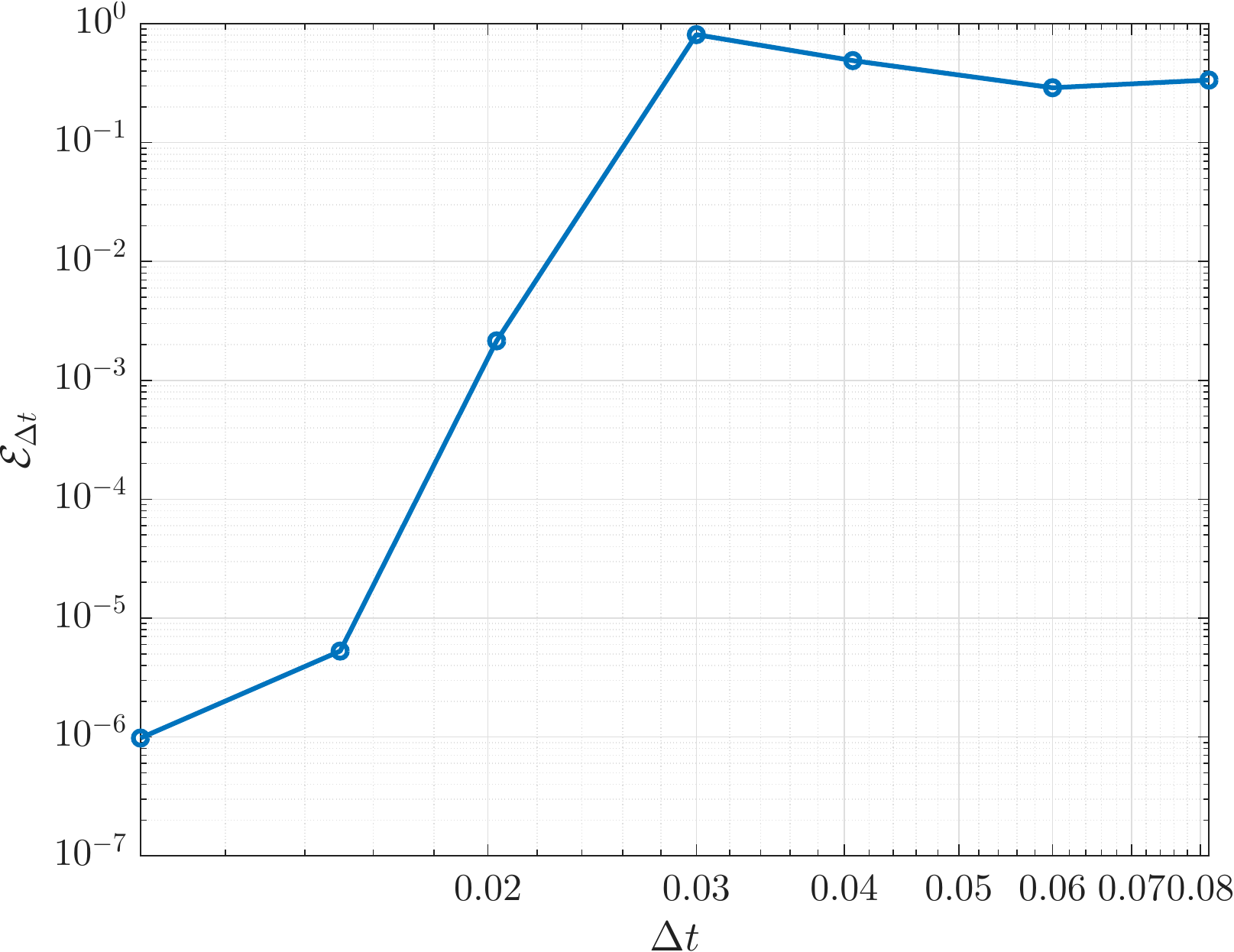}
	\caption{Convergence of the maximum error at $t=6$, estimated on a grid,
          for the eight-source test of Section~\ref{sec:eightSources}.}
	\label{fig:exp6_conv}
\end{figure}

\subsection{Sources and targets on a smooth surface}\label{sec:cruller}

We demonstrate the computational performance of the TK-WFP algorithm compared to a naive direct evaluation in a setting where the source points are also the targets, and they lie on the boundary of a 3D surface. We encounter this evaluation task in time-domain integral-equation schemes for wave scattering problems from 3D obstacles.

We take $M = 102400$ source points with Gaussian pulse time signatures 
\beq
\sigma_j(t) = 10\exp[-\mu_j(t - t_{0,j})^2], \qquad j = 1, \dots, M, 
\eeq
where the peak times are $t_{0,j}= 2 + 5j/M$,
and inverse root widths $\mu_{j}= 30 + 20j/M$,
Thus the peak times lie in $[2,7]$ and the inverse root widths in $[30,50]$.
The largest inverse root width $\mu=50$ is used to estimate
$K_0$ as the frequency at which the Gaussian has dropped from each peak to $\epsilon$,
which gives $K_0 \approx 2 \sqrt{\mu\log(10/\epsilon)} \approx 57$.
We set the Nyquist fraction for the blending $\gamma = 2/3$; the blending region width~\eqref{W} is $W = 14$ time steps. 

We choose source points to be distributed on panels discretizing
a ``cruller'' surface with the parametrization 
\beqs
\begin{split}
\xv(\theta,\psi) &= [(r_2 + H(\theta,\psi)\cos\psi)\cos\theta,\;(r_2 + H(\theta,\psi)\cos\psi)\sin\theta,\;H(\theta,\psi)\sin\psi], \\ 
H(\theta,\psi) &= r_1 + 0.1\cos(5\theta + 3\psi), \quad
\theta\in[0,2\pi], \, \psi \in[0,2\pi],
\end{split}
\eeqs
where $r_1 = 0.3$ and $r_2 = 0.6$.
This surface and its discretization
is a variant of one used as a test case in \cite{Barnett2020},
with different radius parameters chosen to just fit in the cube $B$.

We approximate the solution at the source points themselves using TK-WFP, over $326$ time slices up to a final time $T = 6$, with
the solution time interval $0<t<T$ requiring $N_t = 326$ time steps with $\dt = 0.0184$.
For this $\dt$, the truncation frequency was $K = 171$, needing $N = 313$ frequency modes per dimension at $\dk = 1.0962$.
The typical number of targets within distance $\delta=0.2577$ of each source is
$\Mtyp = 917$.
Recall that when evaluating the potential at the sources themselves,
the self-interaction is excluded.

Comparing against the direct computation of $u$ via \eqref{eq:mainSolnRep2b},
we find the relative maximum error to be $\tilde\calE_{\dt} = 1.8\times10^{-5}$.
Figure~\ref{fig:exp1}(a) shows the computed solution on the cruller at the final time, while the table in Figure~\ref{fig:exp1}(b)
highlights the computational performance of TK-WFP compared to the direct evaluation of $u$ using \eqref{eq:mainSolnRep2b}.
This test is carried out on a single Rome node with two 64-core AMD EPYC 7742 2.25 GHz CPUs (released 2019) and 1000 GB RAM.
One sees from the table that the
TK-WFP method is about 20 times faster than the naive evaluation of $u$.

\begin{figure}[ht]
  (a)\raisebox{-1.85in}{\includegraphics[width=7cm]{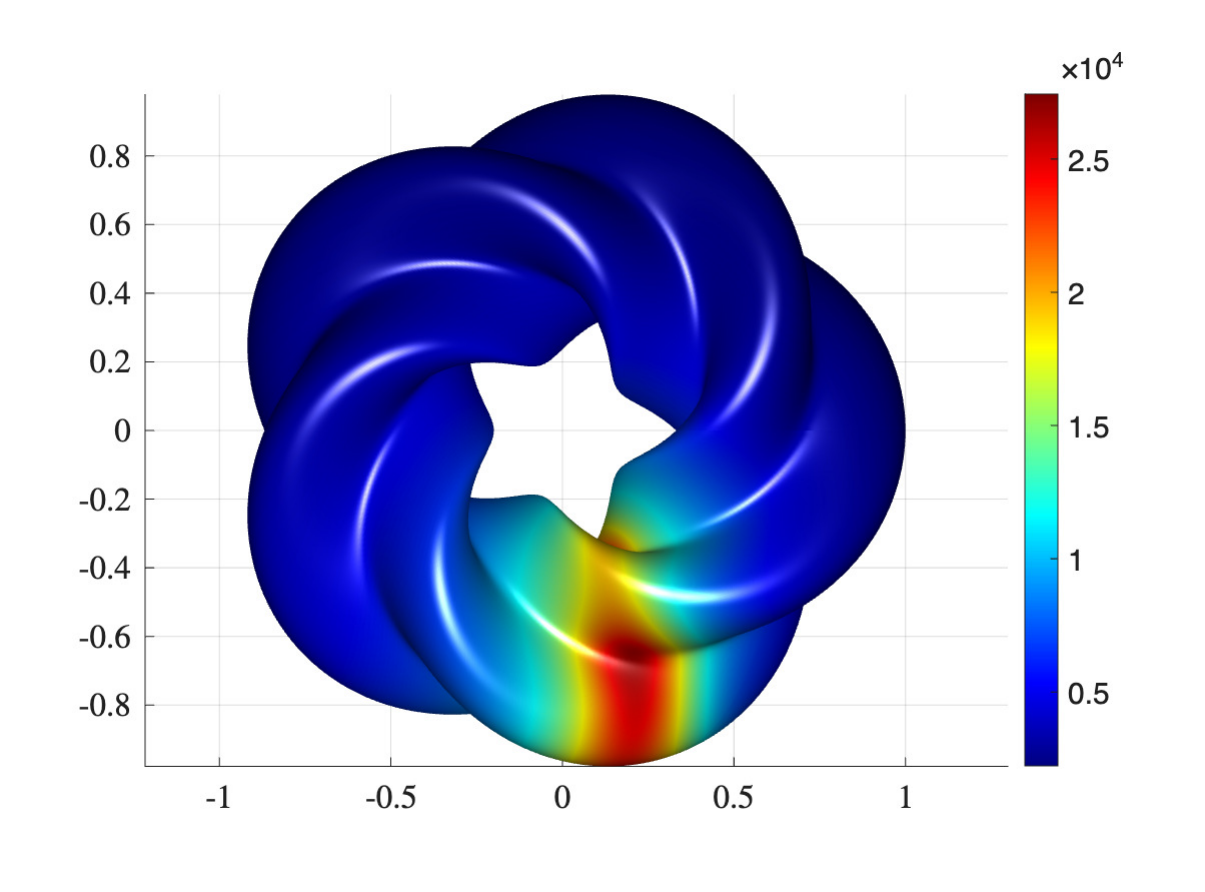}}
\hfill
(b)
\begin{tabular}[t]{|l|l|}
  Task & CPU time \\
  \hhline{|=|=|}
  precomputation                         & 8.19 min   \\ \hline
  $\ul$ eval. per time-step              & 9.57 sec \\
  $\uh$ eval. per time-step              & 2.91 sec \\
  $\alpha(\kv,t)$ update per time-step   & 16.4 sec \\ \hline
  Total per time-step                    & 28.9 sec \\ \hline
  Direct $u$ eval. per time-step         & 10.1 min   \\ \hhline{|=|=|}
  TK-WFP, total for $0\le t\le6$         & 2.76 h  \\ \hline
  Direct eval, total for $0\le t\le 6$   & 54.9 h (est.)  \\ \hline
\end{tabular}
\caption{Results from the surface source and target distribution test
  of Section~\ref{sec:cruller}. Left: computed solution at $t = 6$
  at the $M=102400$ surface points, shown looking down the $z$-axis onto
  the $xy$-plane.
  Color indicates value of $u$, as per the colorscale shown.
  Right: CPU timing table for this experiment. Entries labeled with ``est.'' are estimated
by scaling up from the cost for a single time-step.}
	\label{fig:exp1}
\end{figure}

\subsection{Randomly located sources}\label{sec:randomSources}

We demonstrate the TK-WFP algorithm's ability to handle a large number of sources and targets, in a range of low-to-high frequency settings. Specifically, we test TK-WFP with a million sources and targets, radiating the full range of center frequencies up to $30\pi$
where the computational box $B$ has 30 wavelengths on a side.
Given $M = 10^6$ sources placed at random locations within the box $[-1,1]^3$,
also imposing a $10^{-6}$ minimal distance between any two sources, we choose their time signatures
\beq
\sigma_j(t) = 0.5\left[\erf(5(t - t_{0,j})) + 1\right]\sin(\omega_{j}(t-t_{0,j})), \qquad j = 1, \dots, M.
\eeq
The starting times $t_{0,j}$ and frequencies $\omega_{j}$ are randomly selected
from a set of $M$ equidistant points in $t_{0,j}\in[1.5,5]$ and  $\omega_{j}\in[0,30\pi]$.
Because the maximum frequency and the smooth erf modulation widths are the same as in Section~\ref{sec:eightSources}, the $\epsilon$-bandlimit has the same value $K_0\approx 131$.
At this bandlimit the cube $B$ has a side length about $42$ wavelengths. 
We choose $\gamma = 0.5$ for the Nyquist blending fraction. 

We evaluate the solution up to a final time $T = 6$ over a $100^3$ uniform mesh of target points covering $B$, and $6$ time slices in $[0,T]$.
For the history evolution we use the time step $\dt = 0.012$ and $N_t = 502$.
For the chosen $\dt$, $\delta = 0.2151$, and the typical number of neighboring sources to each target is $\Mtyp = 5214$. The solve requires  a maximum frequency cut-off $K = 263$, $N = 477$ frequency modes, and $\dk = 1.105$. 

Figure~\ref{fig:exp3} presents the computed solution at $t= 4, 6$, while Figure~\ref{fig:exp3_zslice}(a) shows a slice at $t=6$.
The max-norm of the relative error over all space target points at the final time
is $\tilde\calE_{\dt} = 1.4\times 10^{-6}$.

The timing table is shown in Figure~\ref{fig:exp3_zslice}(b), giving a breakdown of the computational performance of TK-WFP compared against the naive direct evaluation of \eqref{eq:mainSolnRep2b}. For the solve over all $\Nt$ steps, we observe an estimated factor of 75 speedup when using TK-WFP over the naive computation.
For this test, a 
Cascade Lake node with four 24-core Xeon Platinum 8268 2.9 GHz CPUs (released 2019), and 2930 GB RAM, was used.
We estimated that the maximum RAM usage of the code was
under 500 GB.

\begin{figure}[th]
	\centering
		\includegraphics[width=7.5cm]{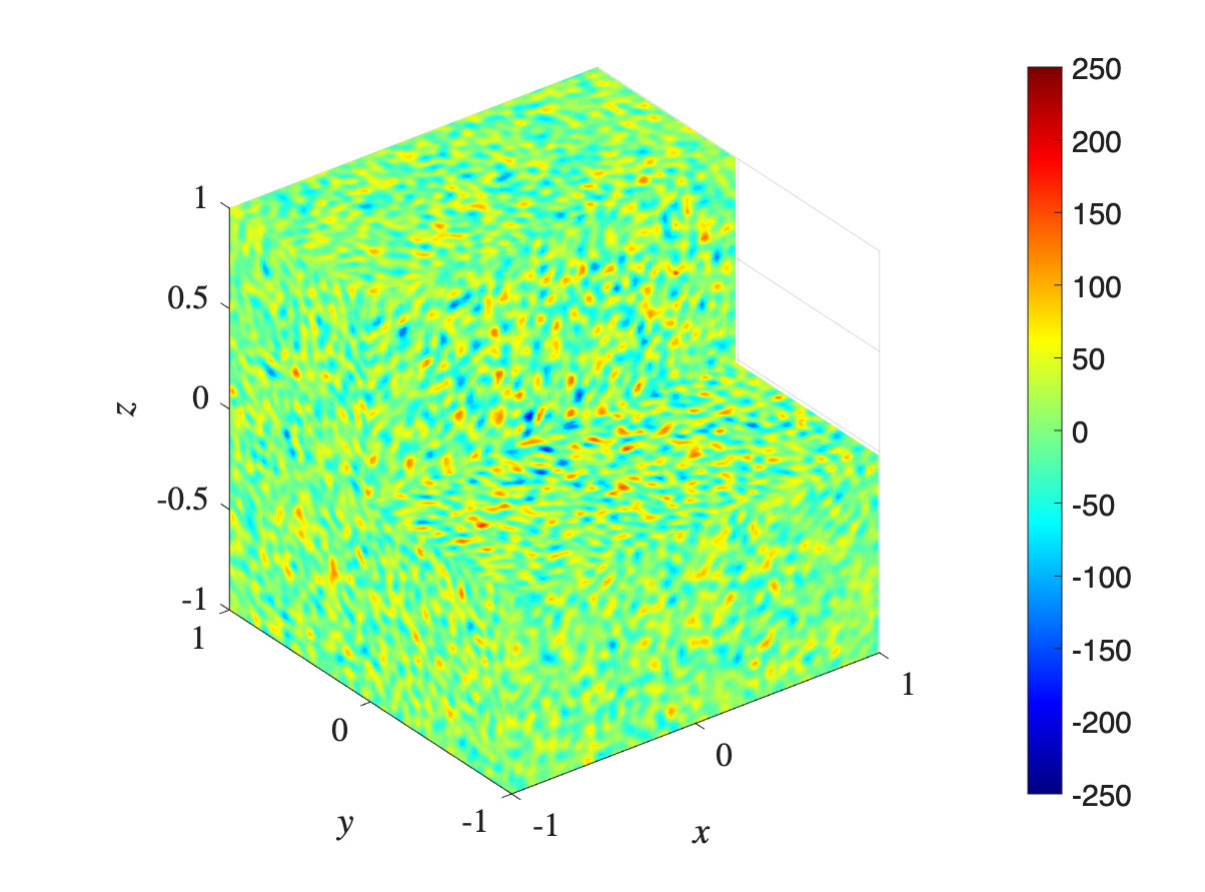}
	\includegraphics[width=7.5cm]{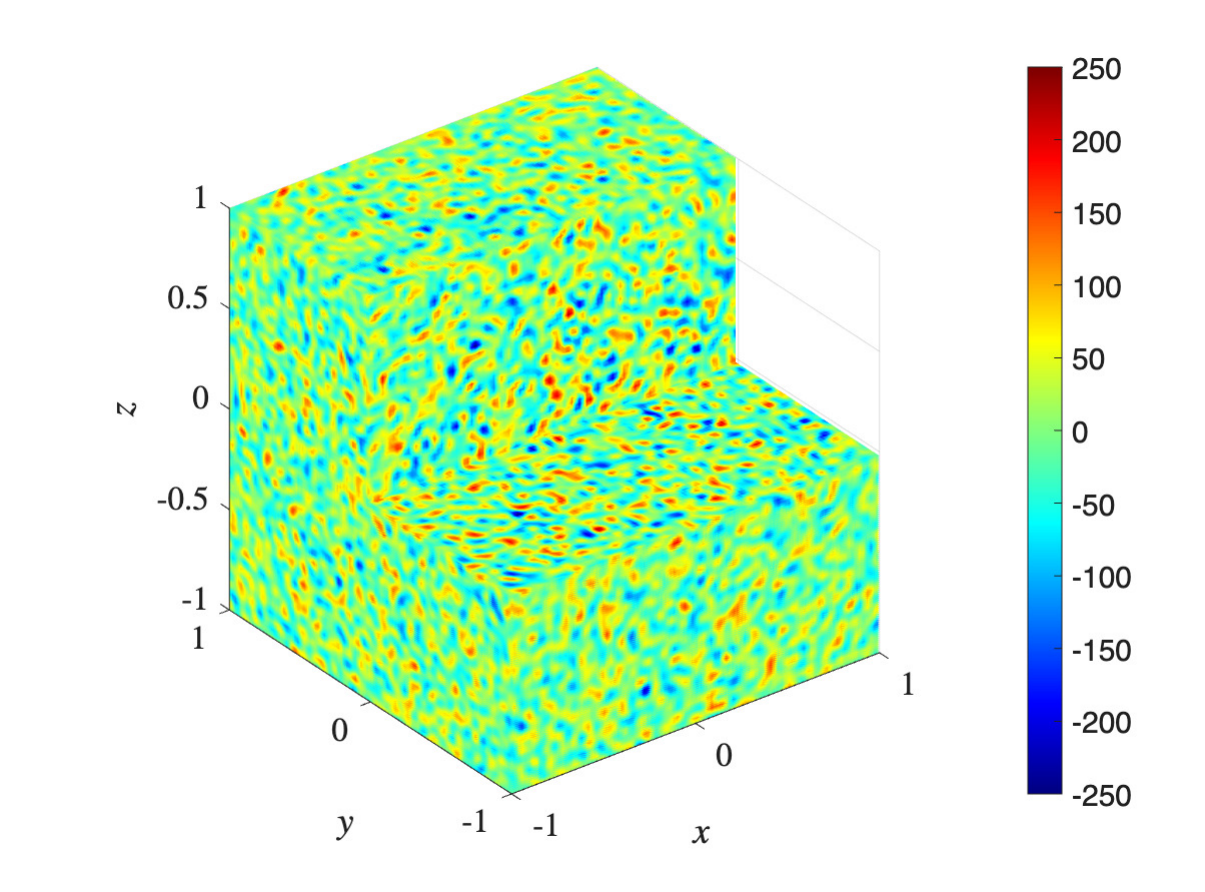}
	\caption{Computed solution at $t = 4$ (left) and $t = 6$ (right), for the test of Section~\ref{sec:randomSources} with $10^6$ sources and targets. The solution is shown on a target grid of size $100^3$. There are $30$ wavelengths per side of cube at the maximum center frequency of the sources, and about 42 wavelengths per side at the $\epsilon$-bandlimit frequency.
          The relative maximum error at $t = 6$ is $\tilde\calE_{\dt} = 1.4\times10^{-6}$.}
	\label{fig:exp3}
\end{figure}

\begin{figure}[th]
(a)
  \raisebox{-1.8in}{\includegraphics[width=7cm]{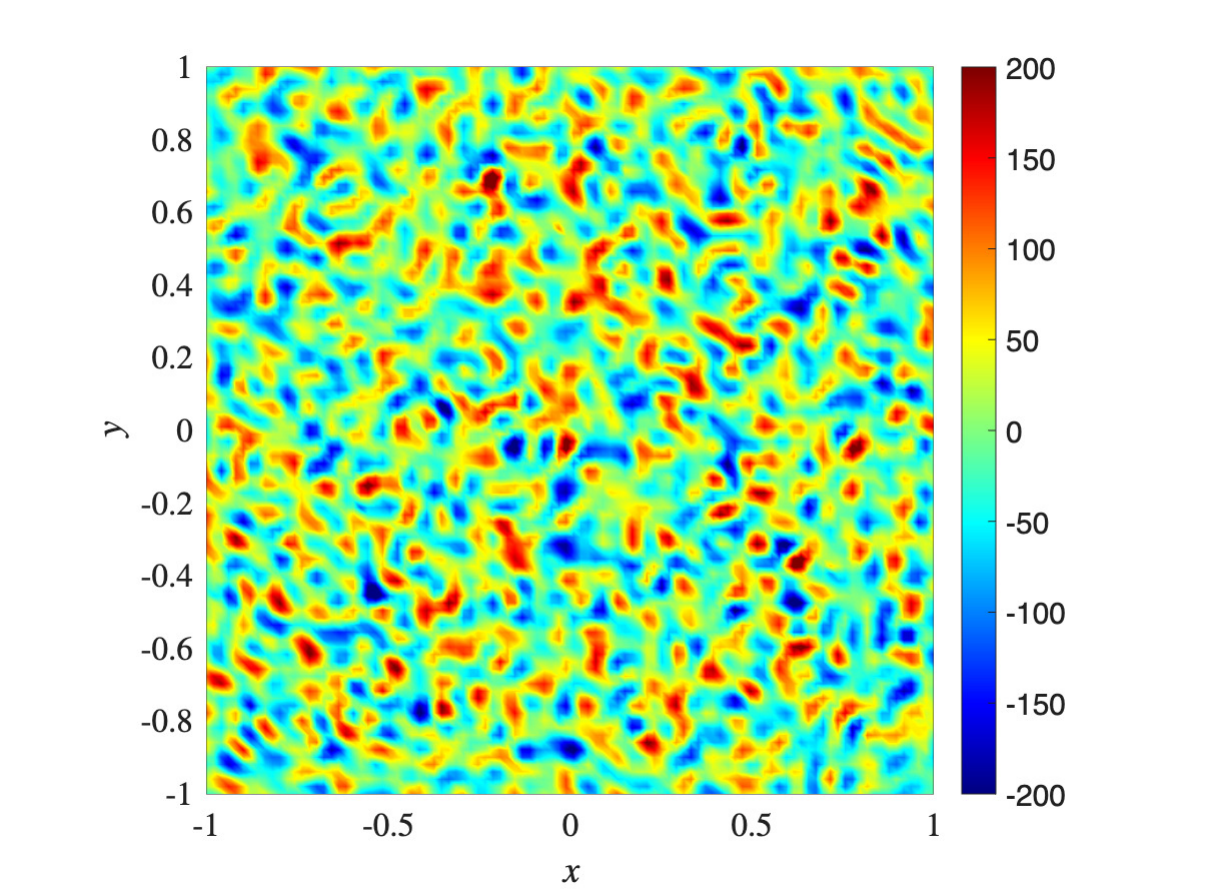}}
\hfill
(b)
\begin{tabular}[t]{|l|l|}
  Task & CPU time \\
  \hhline{|=|=|}
  precomputation                         & 3.36 h   \\ \hline
  $\ul$ eval. per time-step              & 4.23 min \\
  $\uh$ eval. per time-step              & 3.35 sec \\
  $\alpha(\kv,t)$ update per time-step   & 1.58 min \\ \hline
  Total per time-step                    & 5.86 min \\ \hline
  Direct $u$ eval.\ per time-step         & 7.80 h   \\ \hhline{|=|=|}
  TK-WFP, total for $0\le t\le6$         & 52.4 h (est.)   \\ \hline
  Direct eval, total for $0\le t\le 6$   & 3917 h (est.)  \\ \hline
\end{tabular}
\caption{Results from the test of Section~\ref{sec:randomSources} with $10^6$ sources
  and targets.
  Left: computed solution at the final time $t = 6$ on the slice $z = -0.01$.
  Right: CPU timing table for the experiment. Entries labeled with ``est.'' are estimated
by scaling up from the cost for a single time-step.}
\label{fig:exp3_zslice}
\end{figure}

\section{Conclusion and discussion} \label{sec:conclusions}

We have presented the Truncated-Kernel Windowed Fourier Projection (TK-WFP) algorithm for the fast, high-order accurate evaluation of 3D hyperbolic potentials in free space.
The method uses a smooth partition of unity (blending function) to
split the potential into a local part involving only small separations,
plus a smooth remainder (the ``history'' part)
which can be approximated with spectral accuracy by a truncated Fourier integral.
The method limits the history part's rate of oscillation in frequency space via a smooth
spatial truncation of the free-space 3D wave kernel beyond the diameter of the domain;
this exploits strong Huygens and allows error-free Fourier discretization.
The full history dependence
is compressed by exploiting a Duhamel-type update formula for each Fourier coefficient;
the driving terms in this formula only involve the two narrow time intervals where the partition
of unity is changing (creation and annihilation, as in Remark~\ref{r:crea}).
This reduces the cost of evaluation from one that is quadratic with respect to the number of spatial and temporal points to one whose cost involves a linear term in the number of sources, plus a quasilinear term in the number of Fourier modes.
Although the latter term does not result in the optimal scaling in the case of surface
distributions, the scheme has a small prefactor and, being a two-level scheme,
is much simpler to implement than hierarchical time-domain FMMs.
(This is also true of the two-level FFT scheme of \cite{Yilmaz2004}.)

We included a theoretical result that the Fourier truncation error drops exponentially
with respect to the additional bandlimit introduced by the blending, when a Kaiser--Bessel
function is used.
We demonstrated the accuracy and efficiency of the scheme through three numerical examples, including one with a million sources and targets spanning more than 30 wavelengths per side of the computational domain.

The TK-WFP method can serve as an acceleration tool for the evaluation step in a variety of
time-domain 3D wave-scattering scenarios:
a large number of point-like (s-wave) scatterers, line-like scatterers
(eg conductors using a slender-body approximation),
surface boundary conditions, or a volumetric variable medium.
In particular, our future plans include coupling the scheme
with an integral representation, high-order surface quadratures, and a Volterra time
marching scheme, to accelerate the method of \cite{Barnett2020}.

\begin{remark}[Periodic case and FFT size ratio] \label{r:peri}  
  If a periodic boundary condition is needed
  instead of the free space condition presented in this paper, two changes are
  needed: i) using $G$ instead of the truncated kernel $\CGA$, and
  ii) fixing $\dk = \pi$ to give a Fourier series on $[-1,1]^3$.
  We see that the ratio of \eqref{dkprac} to this $\dk$ is $A/2+1 \approx \sqrt{3}+1$
  (when $\delta\ll1$), meaning that the NUFFT coefficient arrays (and hence FFTs)
  are $(\sqrt{3}+1)^3 \approx 20$ times larger for TK-WFP than periodic WFP, all other
  things being equal.
  This ratio does not directly impact the method cost, because of rebalancing
  of local vs history parts. However, it does motivate further work on
  spectrally-accurate radiative conditions.
  Note that for {\em interior} wave problems (e.g.\, room acoustics), periodic conditions
  on an enclosing box are sufficient.
\end{remark}
  
\begin{remark}[Connection to prolate-based Ewald methods]   
  The 3D WFP split of \eqref{ul}--\eqref{uh} may be viewed as a hyperbolic
  version of the Ewald split for 3D elliptic PDE (Poisson equation for periodic
  electrostatics);
  the latter traditionally uses Gaussians or, recently, prolates (see \cite{ESP25}
  and references within).
  The Kaiser--Bessel window $\phi'$ in WFP is a very close approximation
  to the prolate function used in \cite{ESP25},
  and in both methods the antiderivative is taken to get the blending function.
  However, in \cite{ESP25} only one half of the domain of the antiderivative is used,
  making the cutoff radius (analogous to $\delta$) half the size of $\delta$ used
  in WFP for the same spectral bandlimit.
  It would be interesting to explore if i) WFP could be modified in the same way
  to reduce the cost of the local evaluation by a factor of 8, and
  ii) if the upsampling, currently implicit in the NUFFTs for WFP,
  can be avoided as in the 5-step particle mesh Ewald procedure.
\end{remark}

\section*{Acknowledgements}
We are grateful for discussions with Tom Hagstrom.
The Flatiron Institute is a division of the Simons Foundation.

\appendix

\section{Proof of Lemma~\ref{thm:windowFourierLem} on the decay of the Fourier transform of the window function}\label{sec:windowFourierProof}
Starting from the Fourier transform of the Kaiser-Bessel window derivative $\phi'$ in~\eqref{eq:windowFT}, we have
\beq
\abs{\hat{\phi'}(\omega)}\leq \frac{b}{\sinh b} \frac{1}{\sqrt{\left(\frac{\delta\omega}{2}\right)^2 - b^2}}, \quad |\omega| \ge \frac{2b}{\delta},
\eeq
since the sinc is bounded by 1.
Noting that 
$
\sinh b = (1 - \epsilon^2)/2\epsilon < 1/2\epsilon, 
$
for $0<\epsilon<1$, we get
\beq
\abs{\hat{\phi'}(\omega)}< \frac{4\epsilon b}{\delta\abs{\omega}}\frac{1}{\sqrt{1 - \left(\frac{2b}{\delta \omega}\right)^2}},
 \quad |\omega| \ge \frac{2b}{\delta}.
\eeq
For $\theta>1$, rearranging the inequality $\abs{\omega}\geq 2b/\delta \sqrt{1 - 1/\theta^2}$ gives
$
1/\sqrt{1 - \left(\frac{2b}{\delta\omega}\right)^2}\leq \theta, 
$
and~\eqref{eq:KB_bandlimit} follows.~\qed

\section{Proof of Theorem~\ref{thm:alphaDecay} on the decay of Fourier coefficients}\label{sec:alphaDecayProof}
We first establish the following Lemma on the Fourier transform of the pointwise product of two general functions whose Fourier transform satisfies decay properties for large frequencies. 

\renewcommand{\appendixname}{} 
\renewcommand{\thesection}{\Alph{section}} 

\begin{lem}\label{thm:FTofProduct}
Let $f,g \in L^2(\mathbb R)$, $h(t) = f(t)g(t)$, $t\in\mathbb R$, and $0<\epsilon< 1$. Let $\hat{f}$, $\hat{g}$, and $\hat{h}$ be the Fourier transforms of $f,g$ and $h$ respectively, as per the definition
\beqs
\hat{f}(\omega) = \int_{\mathbb R} e^{i\omega t} f(t) dt.
\eeqs
Suppose there exists $C_1$ and $K_1$, and $C_2$ and $K_2$, such that 
\beq
\abs{\hat{f}(\omega)}\leq\frac{C_1\epsilon}{\omega^2}, \quad \text{for } \abs{\omega}\geq K_1, \quad \text{and} \quad \abs{\hat{g}(\omega)}\leq\frac{C_2\epsilon}{\omega^2}, \quad \text{for } \abs{\omega}\geq K_2.
\eeq
Then 
 \beq
 \abs{\hat{h}(\omega)} < \tilde C\frac{\epsilon}{\omega^2}, \quad \text{ for }\abs{\omega}\geq K_1 + K_2, 
 \eeq
 where 
 $$
 \tilde C = \frac{1}{2\pi}\left[C_2\norm{\hat{f}}_1\left(1 + \frac{K_1}{K_2}\right)^2
 + C_1\norm{\hat g}_1\left(1 + \frac{K_2}{K_1}\right)^2
 + 8C_1C_2\left(\frac{1}{K_1} + \frac{1}{K_2}\right)
  \right]. 
$$
\end{lem}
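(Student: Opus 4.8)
The plan is to pass to frequency space via the convolution theorem and then split the resulting convolution integral into three regions according to where each decay hypothesis is available. First I would record the preliminaries: since $f,g\in L^2(\mathbb R)$, Plancherel gives $\hat f,\hat g\in L^2(\mathbb R)$, and combining this with the quadratic tail decay (which is integrable at infinity) and the fact that an $L^2$ function is $L^1$ on any bounded interval, one gets $\hat f,\hat g\in L^1(\mathbb R)$, so that $\norm{\hat f}_1$ and $\norm{\hat g}_1$ are finite. Also $h=fg\in L^1$ by Cauchy--Schwarz, so $\hat h$ is continuous and equals $\hat h(\omega)=\tfrac1{2\pi}(\hat f*\hat g)(\omega)=\tfrac1{2\pi}\int_{\mathbb R}\hat f(\omega-s)\hat g(s)\,ds$, with the integral converging absolutely for every $\omega$ (this is the product--convolution identity for the convention $\hat f(\omega)=\int e^{i\omega t}f(t)\,dt$, and it accounts for the overall $1/2\pi$ in $\tilde C$).

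Next, fixing $\omega$ with $\abs\omega\ge K_1+K_2$, I would partition the $s$-line into $R_f:=\{s:\abs{\omega-s}<K_1\}$, $R_g:=\{s:\abs s<K_2\}$, and $R_{fg}:=\mathbb R\setminus(R_f\cup R_g)$. The hypothesis $\abs\omega\ge K_1+K_2$ makes $R_f$ and $R_g$ disjoint and, crucially, forces $\abs s\ge K_2$ on $R_f$ and $\abs{\omega-s}\ge K_1$ on $R_g$, while on $R_{fg}$ both decay bounds hold simultaneously. On $R_f$ I would bound $\abs{\hat g(s)}\le C_2\epsilon/s^2\le C_2\epsilon/(\abs\omega-K_1)^2$, pull this constant out of $\int_{R_f}\abs{\hat f(\omega-s)}\,ds\le\norm{\hat f}_1$, and convert $(\abs\omega-K_1)^{-2}\le(1+K_1/K_2)^2\,\omega^{-2}$ using $\abs\omega-K_1\ge K_2$; this yields the first term of $\tilde C$, and $R_g$ is entirely symmetric for the second. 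On $R_{fg}$ both bounds give an integrand $\le C_1C_2\epsilon^2/((\omega-s)^2s^2)$, and I would use the identity $\frac1{(\omega-s)^2s^2}=\frac1{\omega^2}\bigl(\frac1s+\frac1{\omega-s}\bigr)^2$ (or, equivalently, split $R_{fg}$ by whether $\abs s$ or $\abs{\omega-s}$ is larger and bound the larger reciprocal square by $4/\omega^2$), together with $\int_{\abs u\ge K}u^{-2}\,du=2/K$, to obtain a bound of the shape $C_1C_2\epsilon^2(1/K_1+1/K_2)/\omega^2$; finally $\epsilon^2<\epsilon$ because $0<\epsilon<1$. Summing the three contributions and dividing by $2\pi$ gives $\abs{\hat h(\omega)}<\tilde C\epsilon/\omega^2$ for all $\abs\omega\ge K_1+K_2$.

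The one genuinely delicate step is the $R_{fg}$ estimate: the crude integrand $1/((\omega-s)^2s^2)$ carries (excluded) singularities at $s=0$ and $s=\omega$, so some algebraic rearrangement---the partial-fraction identity above, or the dyadic split---is what turns ``the integral is finite'' into ``the integral is $O(1/\omega^2)$''. Everything else is routine one-dimensional estimation, and the precise value of the prefactor $\tilde C$ simply reflects how generously those elementary integrals are bounded.
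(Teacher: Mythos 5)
Your proposal is correct and follows essentially the same route as the paper's proof: convolution theorem with the $1/2\pi$ factor, a three-region split of the frequency convolution (your $R_f$, $R_g$, $R_{fg}$ are the paper's $R_1$, $R_3$, $R_2$ up to relabeling), pulling out $\|\hat f\|_1$ and $\|\hat g\|_1$ on the two "near" regions with the conversion $(\abs{\omega}-K_i)^{-2}\le(1+K_i/K_j)^2\omega^{-2}$ valid for $\abs{\omega}\ge K_1+K_2$, a half-$\omega$ (dyadic) split on the doubly-decaying region yielding the $8C_1C_2(1/K_1+1/K_2)$ term, and finally $\epsilon^2<\epsilon$. Your preliminary observation that $\hat f,\hat g\in L^1$ is also consistent with the paper, which records it in a remark immediately after the lemma.
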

\begin{proof}
By the convolution theorem 
\beq
\hat{h}(\omega) = \frac{1}{2\pi} (\hat{f}\ast\hat{g})(\omega) = \frac{1}{2\pi} \int_{\mathbb R} \hat{f}(\xi) \hat{g}(\omega - \xi)d\xi. 
\eeq
Fix $\abs{\omega}\geq K_1 + K_2$, and consider partitioning $\hat{h}(\omega)$ into 
\beq
\hat{h}(\omega) = \frac{1}{2\pi} \bigl(H_1(\omega) + H_2(\omega) + H_3(\omega) \bigr),
\eeq 
where 
\beq
H_l(\omega) : = \int_{R_l(\xi)} \hat{f}(\xi)\hat{g}(\omega - \xi)d\xi,\quad l = 1, 2, 3,  
\eeq
and
\beq
\begin{split}
&R_1(\xi):=  \left\{\xi \in\mathbb R: \abs{\xi}\leq K_1\right\}, \\
&R_2(\xi):=  \left\{\xi \in\mathbb R: \abs{\xi}>K_1, \abs{\omega - \xi}\geq K_2\right\}, \\
&R_3(\xi):=  \left\{\xi \in\mathbb R: \abs{\xi}>K_1, \abs{\omega - \xi}<K_2\right\}. 
\end{split}
\eeq
We seek upper bounds on each of $H_l$, $l = 1,2, 3$. 

In $R_1(\xi)$, and for $\abs{\omega}\geq K_1 + K_2$, $\abs{\omega - \xi}\geq\abs{\omega}- \abs{\xi}\geq\abs{\omega} - K_1\geq K_2$, then 
\beq
\abs{H_1(\omega)} \leq \int_{R_1(\xi)} \abs{\hat{f}(\xi)}\abs{\hat{g}(\omega - \xi)}d\xi\leq \int_{R_1(\xi)}\frac{C_2\epsilon\abs{\hat{f}(\xi)}}{\abs{\omega - \xi}^2}d\xi\leq \frac{C_2\epsilon}{(\abs{\omega} - K_1)^2}\norm{\hat{f}}_1. 
\eeq
Similarly, using a change of variables, we have 
\beq
\abs{H_3(\omega)} \leq \int_{R_3(\omega - \eta)} \abs{\hat{f}(\omega - \eta)}\abs{\hat{g}(\eta)}d\eta\leq \frac{C_1\epsilon}{(\abs{\omega} - K_2)^2}\norm{\hat{g}}_1. 
\eeq
Finally consider 
\beq
\abs{H_2(\omega)} \leq \int_{R_2(\xi)}\abs{\hat{f}(\xi)}\abs{\hat{g}(\omega - \xi)}d\xi\leq C_1C_2\epsilon^2\int_{R_2(\xi)}\frac{1}{\xi^2(\omega - \xi)^2}d\xi,
\eeq
and split $R_2$ further into 
\beq
\begin{split}
&R_{2,1}(\xi):=\left\{\xi \in\mathbb R: \abs{\xi}>K_1, \abs{\omega - \xi}>K_2, \abs{\xi}\leq \abs{\omega}/2\right\}, \\
&R_{2,2}(\xi):= \left\{\xi \in\mathbb R:\abs{\xi}>K_1, \abs{\omega - \xi}>K_2, \abs{\xi}> \abs{\omega}/2\right\}. \\
\end{split}
\eeq
In $R_{2,1}$, $\abs{\omega - \xi}\geq \abs{\omega} - \abs{\xi} \geq \abs{\omega}/2$ so that 
\beq
\int_{R_{2,1}(\xi)} \frac{1}{\xi^2(\omega - \xi)^2}d\xi\leq \frac{4}{\omega^2}\int_{R_{2,1}(\xi)}\frac{1}{\xi^2}d\xi \leq \frac{8}{K_1\omega^2}. 
\eeq
Similarly in $R_{2,2}$, a change of variables gives 
\beq
\int_{R_{2,2}(\xi)}\frac{1}{\xi^2(\omega - \xi)^2}d\xi = \int_{R_{2,2}(\omega - \eta)}\frac{1}{\eta^2(\omega - \eta)^2}d\eta\leq \frac{8}{K_2\omega^2}. 
\eeq
Hence, 
\beq
\abs{H_2(\omega)} \leq \frac{8C_1C_2\epsilon^2}{\omega^2}\left(\frac{1}{K_1} + \frac{1}{K_2}\right). 
\eeq
Therefore, 
 \beq
 \abs{\hat{h}(\omega)}\leq \frac{1}{2\pi} \left[\frac{C_2\norm{\hat f}_1}{(\abs{\omega} - K_1)^2}\epsilon+ \frac{8C_1C_2}{\omega^2}\left(\frac{1}{K_1}+ \frac{1}{K_2}\right)\epsilon^2 + \frac{C_1\norm{\hat g}_1}{(\abs{\omega} - K_2)^2}\epsilon\right].
 \eeq
 Note that $\abs{\omega} - K_1> \frac{K_2}{K_1 + K_2}\abs{\omega}$, whenever $\abs{\omega} > K_1 + K_2$, and similarly $\abs{\omega} - K_2> \frac{K_1}{K_1 + K_2}\abs{\omega}$, then 
 \beq
 \abs{\hat{h}(\omega)}\leq \frac{1}{2\pi}\frac{1}{\omega^2} \left[C_2\norm{\hat{f}}_1\left(1 + \frac{K_1}{K_2}\right)^2\epsilon
 + C_1\norm{\hat g}_1\left(1 + \frac{K_2}{K_1}\right)^2\epsilon
 + 8C_1C_2\left(\frac{1}{K_1} + \frac{1}{K_2}\right)\epsilon^2
  \right]. 
 \eeq
 Noting that $\epsilon^2<\epsilon$ completes the proof.
\qed
\end{proof}
\begin{remark}
If $f\in L^{2}(\mathbb R)$ such that 
$
\abs{\hat{f}(\omega)} \leq C_1\epsilon\omega^{-2}, \ \text{for } \abs{\omega} \geq K_1, 
$
then $\norm{\hat{f}}_1<\infty$ because 
\beq\label{eq:normOfFT}
\norm{\hat{f}}_1 = \int_{\abs{\omega}<K_1}\abs{\hat{f}(\omega)} d\omega + \int_{\abs{\omega}>K_1}\abs{\hat{f}(\omega)}d\omega
\leq \sqrt{2K_1}\norm{\hat{f}}_2 + \frac{2C_1\epsilon}{K_1}<\infty,
\eeq
using Cauchy-Schwarz inequality and the decay of $\hat{f}$. 
\end{remark}

We now prove Theorem~\ref{thm:alphaDecay}. The vanishing properties of $\phi$ allow the extension of the integration limits in the definition of $\alpha(\kv,t)$ in~\eqref{eq:alphak} to the real line, so that
\beq
\alpha(\kv,t) = \int_\mathbb{R} \frac{\sin\kappa(t - \tau)}{\kappa}\sighat(\kv,\tau) \phi(t - \tau)\phi(A - t + \tau)d\tau.
\eeq
Define 
$$\chi(\kv,t;\tau) := \sighat(\kv,\tau) \psi(t-\tau),$$ 
where 
\beq
\psi(\tau) :=
\phi(\tau)\phi(A - \tau) = 
\phi(\tau)[1-\phi(\tau - A + \delta)] = \phi(\tau)-\phi(\tau - A + \delta),
\eeq
where we exploited the inversion symmetry $\phi(\delta-\tau)=1-\phi(\tau)$ and
other properties of $\phi$.
We find that for $t\in[0,T]$, 
\beq
\psi(\tau) = \int_{\tau - A + \delta}^{\tau} \phi'(s)ds, \text{ and }
\hat\psi(\omega) = \frac{\hat{\phi'}(\omega)}{i\omega}\left(e^{i\omega(A - \delta)}- 1\right). 
\eeq
Therefore, by Lemma~\ref{thm:windowFourierLem}, and for some $\theta>1$, we have
\beq
\abs{\hat{\psi}(\omega)} \leq \frac{2\abs{\hat{\phi'}(\omega)}}{\abs{\omega}}< \frac{4b\theta}{\delta}\cdot\frac{\epsilon}{\omega^2}, \quad \text{whenever } \abs{\omega}\geq \frac{2b}{\delta\sqrt{1 - \theta^{-2}}}. 
\eeq
Furthermore, for fixed $\kv$ and $t$, the function $\chi(\kv,t;\tau)$ is $L^2(\mathbb R)$ with respect to $\tau$, and is a product of $S(\kv,t)$ and $\psi(t-\tau)$, whose Fourier transforms with respect to $\tau$ satisfy the following decay properties
\beq
\abs{\hat{S}(\kv,\omega)} \leq M\cdot\frac{\epsilon}{\omega^2}, \quad \abs{\omega}\geq K_0, \quad \text{and} \quad
\abs{\hat{\psi}(\omega)} <\frac{4b\theta}{\delta}\cdot\frac{\epsilon}{\omega^2}, \quad \abs{\omega}\geq K_1, 
\eeq
where $K_1:= 2b/\delta\sqrt{1 - \theta^{-2}}$. Then by Lemma~\ref{thm:FTofProduct}, the Fourier transform of $\chi$ with respect to $\tau$, denoted by $\hat{\chi}(\kv,t;\omega)$, satisfies the property
\beq\label{eq:chiProperty}
\abs{\hat \chi(\kv,t;\omega)} < c_1\frac{\epsilon}{\omega^2}, \quad \text{for } \abs{\omega}>K, 
\eeq
where $K = K_0+K_1$ is the total bandlimit as stated in \eqref{albnd} in the theorem, and
$$
c_1 = \frac{1}{2\pi} \left[\frac{4b\theta}{\delta}\left(\sum_{j = 1}^M\norm{\hat \sigma_j}_1\right)\left(1 + \frac{K_0}{K_1}\right)^2 
+ M\norm{\hat\psi}_1\left(1 + \frac{K_1}{K_0}\right)^2
+ 8\cdot \frac{4b\theta}{\delta} M\left(\frac{1}{K_0} + \frac{1}{K_1}\right)\right]. 
$$
With the following upper bounds on $\norm{\hat{\sigma}_j}_1$ and $\norm{\hat{\psi}}_1$,
\beq\label{eq:oneNormBounds}
\norm{\hat{\sigma}_j}_1\leq 2K_0\norm{\sigma_j}_1 + \frac{2\epsilon}{K_0}, \quad \norm{\hat{\psi}}_1\leq 2K_1A + \frac{4b\theta}{\delta}\cdot\frac{2\epsilon}{K_1}, 
\eeq
and the property that $\norm{\sigma_j}_1\leq P$ for all $j = 1, \dots, M$, we have
\beq\label{eq:c1inequality}
c_1 \leq \frac{1}{2\pi} \left[\frac{8b\theta M}{\delta}\left(K_0P + \frac{1}{K_0}\right)\left(\frac{K}{K_1}\right)^2 
+ 2M\left(K_1 A + \frac{4b\theta}{\delta K_1}\right)\left(\frac{K}{K_0}\right)^2 + 8\cdot\frac{4b\theta M}{\delta}\left(\frac{K}{K_0K_1}\right)\right]. 
\eeq
Plugging the expression of $K_1$, noting that $\sqrt{1 - \theta^{-2}}<1$, and simplifying gives
\beq\label{eq:Cdef}
c_1< CM, \quad
\mbox{ where } C := \frac{K}{\pi K_0\sqrt{1 - \theta^{-2}}}\left\{\frac{2AbK}{\delta K_0} + \left(\theta - \theta^{-1}\right)\left[\frac{\delta K}{b}(K_0^2P + 1) + \frac{2K}{K_0} + 8\right]\right\}. 
\eeq
We use Equation~\eqref{eq:chiProperty} and the inequality in \eqref{eq:Cdef} to establish the decay property on $\alpha(\kv,t)$ given in~\eqref{albnd}. 
Writing $\sin\kappa(t - \tau)$ in terms of complex exponentials gives
\beq
\begin{split}t
\alpha(\kv,t) &= \frac{1}{2i\kappa}\left[e^{i\kappa t}\int_\mathbb{R} e^{-i\kappa\tau}\chi(\kv,t;\tau)d\tau - e^{-i\kappa t}\int_\mathbb{R} e^{i\kappa\tau}\chi(\kv,t;\tau)d\tau\right], \\
& = \frac{1}{2i\kappa}\left[e^{i\kappa t}\hat\chi(\kv,t;-\kappa) - e^{-i\kappa t}\hat\chi(\kv,t;\kappa)\right],
\end{split}
\eeq
so that 
\beq
\abs{\alpha(\kv,t)} \leq \frac{1}{2\kappa} \left(\abs{\hat{\chi}(\kv,t;-\kappa)} + \abs{\hat{\chi}(\kv,t;\kappa)}\right). 
\eeq
Finally, applying Equation~\eqref{eq:chiProperty} and \eqref{eq:c1inequality} yields
the claimed inequality \eqref{albnd},
with the constant $C$ as in \eqref{eq:Cdef}.
As claimed, \eqref{eq:Cdef} shows that $C$ grows weakly with respect to the
parameters (at most quadratic with respect to bandlimit, when considering combined powers of $K_0$, $K$, and $b/\delta$, and linear in all other parameters).
\qed

\bibliographystyle{elsarticle-num}
\bibliography{refs}
 
\end{document}